\newcommand{\Z}{\ensuremath{\mathbb{Z}}}
\newcommand{\Q}{\ensuremath{\mathbb{Q}}}
\theoremstyle{plain}
\newtheorem{thm}{Theorem}[section]
\newtheorem{lem}[thm]{Lemma}
\newtheorem{cor}[thm]{Corollary}
\newtheorem{propdfn}[thm]{Proposition-Definition}
\newtheorem{question}[thm]{Question}
\theoremstyle{definition}
\newtheorem{rmq}[thm]{Remark}
\DeclareMathOperator{\Spec}{Spec}
\DeclareMathOperator{\Pic}{Pic}
\DeclareMathOperator{\Jac}{Jac}
\DeclareMathOperator{\Cl}{Cl}
\DeclareMathOperator{\ind}{ind}
\DeclareMathOperator{\rk}{rk}
\DeclareMathOperator{\img}{Im}
\newcommand{\gm}{\mathbf{G}_{{\rm m}}}
\newcommand{\V}{\mathcal{V}}
\newcommand{\X}{\mathcal{X}}
\newcommand{\J}{\mathcal{J}}
\newcommand{\C}{\mathscr{C}}
\newcommand{\tors}{\text{\rm tors}}
\begin{document}

\title{Pulling back torsion line bundles to ideal classes}

\author{Jean Gillibert \and Aaron Levin}


\maketitle

\begin{abstract}
We prove results concerning the specialisation of torsion line bundles on a variety $V$ defined over $\Q$ to ideal classes of number fields. This gives a new general technique for constructing and counting number fields with large class group.
\end{abstract}


\section{Introduction}

The purpose of this paper is to study the specialisation of torsion line bundles, on a variety $V$ defined over $\Q$, to ideal classes of number fields, and to make this operation optimal, that is, to make the kernel of specialisation as small as possible.

The main motivation for introducing this technique is the construction of number fields whose class group has large torsion from varieties (over $\Q$) whose Picard group has large torsion. For instance, we show how the problem of constructing quadratic number fields with large class group can be reduced to the problem of finding a hyperelliptic curve with a rational Weierstrass point and a large rational torsion subgroup in its Jacobian (see Corollary~\ref{chyp}).  This technique is an abstraction and generalisation of the method used in \cite{Levin}, which used certain superelliptic curves to obtain new constructions of number fields with a large ideal class group.  Further results on the construction of large ideal class groups will appear in a separate paper by the second author.  We mention also the closely related geometric techniques used by Mestre in \cite{Mes,Mes3,Mes2,Mes4} to construct number fields with a large ideal class group.

On the other hand, the question of pulling back line bundles on arithmetic varieties to nontrivial ideal classes of number fields has been raised by Agboola and Pappas in \cite{bisipappas}. Our results positively answer their question in the case of torsion line bundles on a hyperelliptic curve (see Corollary~\ref{partial_answer} for the precise statement).

A consequence of our main result is the following:

\begin{thm}
\label{intro_theorem}
Let $V$ be a smooth projective variety over $\Q$, and let $m>1$ be an integer. Let $S$ be the set of places of bad reduction of $V$, and let $\V\rightarrow \Spec(\Z_S)$ be a smooth projective model of $V$. Then there exists an infinity of number fields $K$ with a point $P\in V(K)$ such that the specialisation map
$$
P^*:\Pic(V)[m]=\Pic(\V)[m]\longrightarrow \Cl(\mathcal{O}_{K,S})[m]
$$
satisfies
$$
\rk_m \img(P^*)\geq \rk_m \Pic(V)[m] +\#S - \rk \mathcal{O}_{K,S}^{\times}.
$$
More precisely, given a generically finite rational map $V\rightarrow \mathbb{A}^l$ of degree $d$, it is possible to find an infinity of such $K$ of degree $d$.
\end{thm}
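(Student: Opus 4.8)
The plan is to pass to Kummer theory for $\mu_m$ and to produce the fields $K$ by Hilbert's irreducibility theorem applied to the given generically finite rational map $V\dashrightarrow\mathbb{A}^l$. Write $R=\mathcal{O}_{K,S}$, using $S$ also for the set of places of $K$ above $S$. Since $\V$ is proper over $\Spec\Z_S$, a point $P\in V(K)$ extends uniquely to $P\colon\Spec R\to\V$, and $\Pic(\V)=\Pic(V)$ since $\Z_S$ is a principal ideal domain. Pullback along $P$ gives a morphism of fppf Kummer sequences
\[
0\to\Z_S^\times/(\Z_S^\times)^m\to H^1_{\fppf}(\V,\mu_m)\to\Pic(V)[m]\to0,\qquad 0\to R^\times/(R^\times)^m\to H^1_{\fppf}(R,\mu_m)\to\Cl(R)[m]\to0,
\]
with $P^*$ on the rightmost terms the specialisation map of the theorem.

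A diagram chase bounds $\rk_m\img\bigl(P^*\colon\Pic(V)[m]\to\Cl(R)[m]\bigr)$ from below by $\rk_m\img\bigl(P^*\colon H^1_{\fppf}(\V,\mu_m)\to H^1_{\fppf}(R,\mu_m)\bigr)-\rk_m\bigl(R^\times/(R^\times)^m\bigr)$. Here $\rk_m\bigl(R^\times/(R^\times)^m\bigr)$ equals $\rk\mathcal{O}_{K,S}^\times$ whenever $\zeta_m\notin K$ (and $\rk\mathcal{O}_{K,S}^\times+1$ for $m=2$), while the classes of $p^{1/m}$ for $p\in S$ — and of $(-1)^{1/m}$ for $m=2$ — span a free submodule of $\Z_S^\times/(\Z_S^\times)^m\subseteq H^1_{\fppf}(\V,\mu_m)$ projecting to $0$ in $\Pic(V)[m]$; this is the one place where the integral model $\V$ over $\Z_S$, rather than $V$ over $\Q$, is indispensable, since $p^{1/m}$ defines a class on $\V$ precisely because $p$ is invertible there. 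As $\Z/m$ is self-injective this submodule splits off, so $\rk_m H^1_{\fppf}(\V,\mu_m)\ge\rk_m\Pic(V)[m]+\#S$ (resp.\ $+\#S+1$ for $m=2$). Hence, once $P^*$ is injective on the finite group $H^1_{\fppf}(\V,\mu_m)$, the chase yields the bound of the theorem. It thus suffices to produce infinitely many number fields $K$ of degree $d$, with $\zeta_m\notin K$ and a point $P\in V(K)$, for which $P^*$ is injective on $H^1_{\fppf}(\V,\mu_m)$.

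For this, fix a dense open $U\subseteq\mathbb{A}^l$ over which $V\dashrightarrow\mathbb{A}^l$ restricts to a finite étale $\pi\colon\widetilde V\to U$ of degree $d$. By Hilbert irreducibility, the set of $\mathbf a\in U(\Q)$ with $\pi^{-1}(\mathbf a)=\Spec K_{\mathbf a}$ a field of degree $d$ and $\zeta_m\notin K_{\mathbf a}$ is non-thin; each carries a canonical point $P_{\mathbf a}\in V(K_{\mathbf a})$ over $\mathbf a$. If $P_{\mathbf a}^*$ is not injective, then $P_{\mathbf a}^*\beta=0$ for some nonzero $\beta\in H^1_{\fppf}(\V,\mu_m)$; as the associated $\mu_m$-torsor $Y_\beta\to\V$ is finite, hence proper over $\Z_S$, and $\beta\ne0$ means $Y_\beta$ has no section over the normal scheme $\V$, the point $\Spec K_{\mathbf a}\to V$ must lift along a connected component $Y\to V$ of $(Y_\beta)_\Q\to V$ of degree $\ge2$ — equivalently, the fibre of $Y\times_V\widetilde V\to U$ over $\mathbf a$ acquires a $K_{\mathbf a}$-point. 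Since the generic fibre of the finite map $Y\times_V\widetilde V\to\widetilde V$ is $\Spec$ of the field $\Q(Y)$, a nontrivial extension of $\Q(V)$, Hilbert irreducibility — applied to a Galois closure of $Y\times_V\widetilde V\to U$, using $d\ge2$ (the case $d=1$ being vacuous, as then $V$ is rational and $\Pic(V)[m]=0$) — shows this occurs only for $\mathbf a$ in a thin set. Removing the finitely many such thin sets leaves a non-thin, hence infinite, set of good $\mathbf a$; and since for each number field $F$ the set of $\mathbf a$ with $K_{\mathbf a}\cong F$ is thin, infinitely many distinct fields $K_{\mathbf a}$ arise, as required.

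The main obstacle is the thin-set step together with the precise numerology. For the former, one must treat the covers $Y\times_V\widetilde V$, which may be geometrically reducible: passing to Galois closures one checks that no component maps birationally to $U$, a fact guaranteed by $d\ge2$ only after working with components defined over $\Q$ and using that $\widetilde V$ is geometrically irreducible. For the latter, obtaining the exact correction $\#S-\rk\mathcal{O}_{K,S}^\times$ — rather than one off by a bounded constant — forces a careful analysis of the finite abelian groups in the Kummer sequences (all killed by $m$, so best handled one prime $\ell\mid m$ at a time) and of the $m$-torsion of the $S$-unit group, and it is exactly here that the choice of $S$ as the bad-reduction locus is used. The extension to a specific degree-$d$ map is, by contrast, essentially built into the construction of the $K_{\mathbf a}$.
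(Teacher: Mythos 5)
The overall architecture — Kummer theory on the model $\V$, Hilbert irreducibility to produce the fields $K$ and force injectivity of $P^*$, then reading off the class group from the Kummer sequence of $\mathcal{O}_{K,S}$ — is the same as the paper's, and your Hilbert-irreducibility step (remove, for each nonzero $\beta\in H^1_{\fppf}(\V,\mu_m)$, the thin set of $\mathbf a$ for which $P_{\mathbf a}^*\beta=0$) is a workable variant of what the paper does (a single application of Theorem~\ref{HIT} over the auxiliary base $k=\Q(\sqrt[m]{u}:u\in\Z_S^\times)$ to the composite cover $X\to V\to\mathbb{A}^l$, where $X$ is the fibre product of the lifted torsors). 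The genuine problem is in the numerology of the ``diagram chase''.

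You assert that
$$
\rk_m \img\bigl(P^*\colon \Pic(V)[m]\to\Cl(\mathcal{O}_{K,S})[m]\bigr)\;\ge\;\rk_m\img\bigl(P^*\colon H^1(\V,\mu_m)\to H^1(\mathcal{O}_{K,S},\mu_m)\bigr)-\rk_m\bigl(\mathcal{O}_{K,S}^\times/m\bigr),
$$
but this is false as a general inequality for $m$-torsion groups. If $0\to C\to A\to B\to 0$ is exact and $C\hookrightarrow D$, Lemma~\ref{m_rank_sequence} gives $\rk_m A\ge \rk_m C+\rk_m B$, i.e.\ an \emph{upper} bound $\rk_m B\le\rk_m A-\rk_m C$; the lower bound you want can fail. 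Concretely: $A=\Z/4$, $C=2A\cong\Z/2$, $D=\Z/2$, $m=4$ gives $\rk_4(A/C)=0$ but $\rk_4 A-\rk_4 D=1$. Equality in Lemma~\ref{m_rank_sequence} requires one of the outer terms to be \emph{free} over $\Z/m$. The paper engineers exactly this: it first quotients $H^1(\mathcal{O}_{K,S},\mu_m)$ by the image of $\Z_S^\times/m$, and then imposes that $K$ be linearly disjoint from $\Q(\zeta_{2m})$ so that $(\mathcal{O}_{K,S}^\times/m)/(\Z_S^\times/m)\cong(\Z/m)^{\rk\mathcal{O}_{K,S}^\times-\#S}$ is $\Z/m$-free; Lemma~\ref{m_rank_sequence} then yields exact additivity and the bound. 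Your condition ``$\zeta_m\notin K$'' does guarantee $\rk_m(\mathcal{O}_{K,S}^\times/m)=\rk\mathcal{O}_{K,S}^\times$, but it does \emph{not} guarantee that $(\mathcal{O}_{K,S}^\times/m)/(\Z_S^\times/m)$ is free (e.g.\ $m=15$, $\zeta_3\in K$, $\zeta_{15}\notin K$ leaves a $\Z/3$ torsion factor), nor that $\Z_S^\times/m\to\mathcal{O}_{K,S}^\times/m$ is injective for $m$ even, and without these the correction term can drift by a bounded amount — precisely the loss your argument was supposed to avoid. The fix is to replace ``$\zeta_m\notin K$'' by ``$K$ linearly disjoint from $\Q(\zeta_{2m})$'' (or, more simply, absorb $\Q(\zeta_{2m})$ into the auxiliary base field for Hilbert irreducibility, as the paper does with $k$), and then to run the Kummer-sequence argument \emph{after} quotienting by $\Z_S^\times/m$ rather than trying to subtract $\rk_m(\mathcal{O}_{K,S}^\times/m)$ directly.

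Two smaller remarks. First, the paper proves the estimate on $\rk_m\img(P^*)$ by exhibiting $r=\rk_m\Pic(V)_{\tors}$ explicit classes $P^*X_i$ that remain $\Z/m$-independent modulo $\Z_S^\times/m$; you instead want $P^*$ injective on all of $H^1(\V,\mu_m)$, which is stronger than necessary but achievable — only make sure you also observe that $H^1(\V,\mu_m)\to H^1(V,\mu_m)$ is injective (its kernel sits in $\Z_S^\times/m\to\Q^\times/m$, which is injective since $V$ is proper), so that a nonzero $\beta$ really does give a cover all of whose $\Q$-components have degree $\ge2$ over $V$. Second, the split $H^1(\V,\mu_m)\cong(\Z/m)^{\#S}\oplus Q$ and the ensuing inequality $\rk_m H^1(\V,\mu_m)\ge\rk_m\Pic(V)[m]+\#S$ are fine, but by themselves they are not the bottleneck; the bottleneck is the step flagged above.
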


Two short comments: 1) in our terminology, a variety over $\Q$ is a geometrically integral separated scheme of finite type over $\Q$. 2) If $A$ is a finitely generated abelian group and $m\geq 2$ an integer, $\rk_m A$ denotes the largest integer $r$ such that $A$ has a subgroup isomorphic to $(\Z/m)^r$. See \S\ref{notation_def} for further notation.

Of course, results analogous to those given here can also be established when the variety $V$ is defined over any number field instead of $\Q$.
Another generalisation would be to replace line bundles, i.e. $\gm$-torsors, by torsors under other group schemes, like tori.

Let us briefly describe the proof of Theorem~\ref{intro_theorem}. We first choose $r$ linearly independent elements of order $m$ in the group $\Pic(V)=\Pic(\V)$ (see Lemma~\ref{good_model}). By Kummer theory over $\V$, it is possible to lift these $r$ elements to $r$ elements in the flat (fppf) cohomology group $H^1(\V,\mu_m)$. These classes in $H^1(\V,\mu_m)$ are represented by $\mu_m$-torsors $\X_i\rightarrow \V$. Let $X_i\rightarrow V$ be the restriction of $\X_i$ to $V$, and let $X\rightarrow V$ be the fiber product over $V$ of the $X_i$. We choose a generically finite rational map $V\rightarrow \mathbb{A}^l$ of degree $d$ and apply Hilbert's irreducibility theorem to the composite cover $X\rightarrow V\rightarrow \mathbb{A}^l$. Thus, we get an infinity of points in $\mathbb{A}^l$ whose inverse image in $V$ is a point $P\in V(K)$, with $[K:\Q]=d$. Moreover, the images of the $\X_i$ by the map
$$
P^*:H^1(\V,\mu_m)\longrightarrow H^1(\mathcal{O}_{K,S},\mu_m)
$$
are $r$ linearly independent elements of order $m$. Now, we look at the image of these elements by the natural map $c:H^1(\mathcal{O}_{K,S},\mu_m)\rightarrow \Cl(\mathcal{O}_{K,S})$. By Kummer theory (see \S\ref{notation_def}), the kernel of this map is $\mathcal{O}_{K,S}^{\times}/m$, but we can improve the situation by asking that the subgroup generated by the $P^*\X_i$ has trivial intersection with the subgroup $\Z_S^{\times}/m$. It is also possible to ensure that $K$ is linearly disjoint from the cyclotomic field $\Q(\zeta_{2m})$. Under these conditions, the kernel of the map $c\circ P^*$ is isomorphic to a subgroup of $(\Z/m)^{\rk \mathcal{O}_{K,S}^{\times}-\# S}$, which proves the result after invoking Lemma~\ref{m_rank_sequence}.

The plan of this paper is as follows. In section~2, after recalling effective versions of Hilbert's irreducibility theorem, we prove our main result, Theorem~\ref{MainTheorem}, involving a variety $V$ that satisfies some special properties with respect to some set $S$ of prime numbers. Then, using the theory of integral models, we give consequences of our result in the case when $V$ is smooth and $S$ is the set of primes of bad reduction of $V$. In section~3, using superelliptic and hyperelliptic curves, we give applications of our technique to the construction of number fields with large class group. Finally, we explain how these results can be applied, in the case of torsion line bundles on hyperelliptic curves, to the question of Agboola and Pappas.

\subsection*{Acknowledgements}

We thank Qing Liu for inspiring conversations, and C\'edric Pepin for technical advice about line bundles on integral models.


\section{Specialisation of $\mu_m$-torsors}

\subsection{Notation and definitions}
\label{notation_def}

By a variety over a field $k$, we will mean a geometrically integral separated scheme of finite type over $k$.

If $A$ is an abelian group, and if $m>1$ is an integer, we will denote by $A[m]$ the kernel of multiplication by $m$ in $A$ (that is, the $m$-torsion of $A$), and by $A/m$ the cokernel of multiplication by $m$ in $A$. Also, if $A$ is a finitely generated abelian group, the $m$-rank of $A$, which we denote by $\rk_m A$, is the largest integer $r$ such that $A$ has a subgroup isomorphic to $(\Z/m)^r$.

In the following, $S$ will denote a finite set of prime numbers. If $K$ is a number field, we will denote by $\mathcal{O}_{K,S}$ the ring of $S_K$-integers of $K$, where $S_K$ is the set of primes of $K$ lying above primes in $S$.
In the particular case when $K=\Q$, we denote this ring by $\Z_S$ instead of $\mathcal{O}_{\Q,S}$. Let us note that $\Z_S$ is nothing else than the ring of fractions $\Z[S^{-1}]$.

We recall that, by Kummer theory, we have a canonical isomorphism

$$
H^1(K,\mu_m)\simeq K^{\times}/m.
$$
Under this isomorphism, one identifies the first cohomology group $H^1(\mathcal{O}_{K,S},\mu_m)$, computed using the fppf topology, as the following subgroup of $H^1(K,\mu_m)$ :
$$
H^1(\mathcal{O}_{K,S},\mu_m)=\{z\in K^{\times}/m ~|~ \forall \mathfrak{p}\notin S_K, v_{\mathfrak{p}}(z)\equiv 0 \pmod{m}\},
$$
where $\mathfrak{p}$ runs through nonzero prime ideals of $K$. According to fppf Kummer theory over $\Spec(\mathcal{O}_{K,S})$, this group fits into the exact sequence
\begin{equation}
\label{kummer}
\begin{CD}
0 @>>> \mathcal{O}_{K,S}^{\times}/m @>>> H^1(\mathcal{O}_{K,S},\mu_m) @>>> \Cl(\mathcal{O}_{K,S})[m] @>>> 0.\\
\end{CD}
\end{equation}
Let us note that all the groups involved here are $m$-torsion.

Unless specified, all our cohomology groups are computed with respect to the fppf topology.

\subsection{Hilbert's Irreducibility Theorem}

An essential tool in our proofs is Hilbert's Irreducibility Theorem, which we now recall in a suitably general, quantitative form due to Cohen \cite{Coh2} (see also \cite[Ch. 9]{Ser}).

For a point $t=(t_1,\ldots, t_n)\in \mathbb{A}^n(\Z)$, define the height $H(t)=\max_j |t_j|$.
\begin{thm}[Hilbert Irreducibility Theorem]
\label{HIT}
Let $V$ be a variety over $\Q$ of dimension $l$.  Let $\phi:V\to \mathbb{A}^l$ be a generically finite rational map.  For $t\in \phi(V)(\Q)$, let $P_t=\phi^{-1}(t)$.  Let $k$ be a number field.  Then for all but $O\left(N^{l-\frac{1}{2}}\log N\right)$ points $t\in \mathbb{A}^l(\Z)$ with $H(t)\leq N$, we have $t\in \phi(V)$ and $P_t=\Spec(\Q(P_t))$, where $[k(P_t):k]=\deg\phi$.   If  $l=1$, $O\left(\sqrt{N}\log N\right)$ can be replaced by $O\left(\sqrt{N}\right)$ in the above.
\end{thm}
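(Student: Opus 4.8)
\medskip

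\noindent\emph{Sketch of proof (following Cohen).} The plan is to reduce the statement to a one-variable question about how often a fixed finite family of auxiliary covers acquires a rational point, and then to control that by the large sieve. Choosing a primitive element for the degree-$d$ extension $\Q(V)/\Q(t_1,\dots,t_l)$ of function fields determined by $\phi$, the map $\phi$ becomes, outside a fixed proper closed subset $Z\subsetneq\mathbb{A}^l$, the projection to $t=(t_1,\dots,t_l)$ from a hypersurface $\{f(t,y)=0\}$, where $f\in\Q[t][y]$ is monic of $y$-degree $d$ and irreducible over $\Q(t)$. For $t\notin Z$, the fibre $P_t$ fails to be a single reduced point $\Spec(\Q(P_t))$ with $[\Q(P_t):\Q]=d$ and $\Q(P_t)$ linearly disjoint from $k$ over $\Q$ precisely when $f(t,y)$ is reducible over $\Q$, or $\Q(P_t)$ is not linearly disjoint from $k$. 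In the first case $f(t,y)$ has a $\Q$-rational factor of some degree $j$ with $1\le j\le d-1$, so a suitable resolvent polynomial --- with roots indexed by the $j$-element subsets of the $d$ sheets --- acquires a $\Q$-rational root; in the second case a similar auxiliary polynomial, built from the Galois closures of $\Q(V)/\Q(t)$ and of $k/\Q$, does. Decomposing these resolvents over $\overline{\Q}$ and keeping only the geometrically irreducible factors defined over $\Q$ (the only ones that can have a $\Q$-rational root), and using that the geometric monodromy group $G\subseteq S_d$ acts transitively on the $d$ sheets --- which holds because $V$ is geometrically integral, hence $V_{\overline{\Q}}$ is irreducible --- one finds that each such factor corresponds to a geometrically irreducible cover $Y_\alpha\to\mathbb{A}^l$ of degree $\ge 2$, and that there are only finitely many of them. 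In sum, the exceptional $t\in\mathbb{A}^l(\Z)$ are those lying in $Z$ --- of which there are $O(N^{l-1})$ --- together with those at which some cover from the fixed finite family $\{Y_\alpha\to\mathbb{A}^l\}$ has a $\Q$-rational point above $t$.

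\medskip

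\noindent It is therefore enough to bound, for each fixed geometrically irreducible cover $Y\to\mathbb{A}^l$ of degree $e\ge 2$ with (transitive) monodromy group $M\subseteq S_e$, the number of $t\in\mathbb{A}^l(\Z)$ with $H(t)\le N$ over which $Y$ has a $\Q$-rational point. Freezing $t_2,\dots,t_l$ at integer values outside a proper closed subset of $\mathbb{A}^{l-1}$, so that the restricted cover $Y'\to\mathbb{A}^1$ stays geometrically irreducible with the same monodromy group (this excludes only $O(N^{l-2})$ of the choices of height $\le N$), we reduce to one variable. If $Y'$ has a $\Q$-rational point above $t_1$, then for every prime $p$ outside a fixed finite set depending only on $V,\phi,k$, the reduction $Y'\bmod p$ has an $\mathbb{F}_p$-point above $\bar t_1$; equivalently, the Frobenius at the place of $\mathbb{F}_p(t_1)$ at $t_1=\bar t_1$ fixes one of the $e$ sheets of $Y'$. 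By Jordan's theorem the transitive group $M$ contains a fixed-point-free element; hence, by the Chebotarev density theorem for the function field $\mathbb{F}_p(t_1)$ --- equivalently by the Lang--Weil estimates applied to $Y'\bmod p$ and its Galois closure, with error $O(\sqrt{p})$ of implied constant depending only on the complexity of $Y'$ --- the number of $\bar t_1\in\mathbb{F}_p$ over which $Y'\bmod p$ has no $\mathbb{F}_p$-point is at least $\delta p+O(\sqrt{p})$ with $\delta=1/\#M>0$, at least for the density-one set of primes at which the arithmetic monodromy of $Y'$ coincides with $M$. Thus the set of relevant $t_1$ omits at least $\delta p$ residue classes modulo each prime $p$ in a set of primes of positive density.

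\medskip

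\noindent The large sieve inequality now bounds this set of $t_1$ by $\ll N/\sum_{p\le Q}\delta\ll N\log Q/(\delta Q)$, the sum running over the admissible primes $p\le Q$; taking $Q\asymp\sqrt{N}$ yields a bound $O(\sqrt{N}\log N)$. Summing over the finitely many covers $Y_\alpha$ and over the at most $(2N+1)^{l-1}$ choices of frozen coordinates --- the $O(N^{l-2})$ bad choices contribute only $O(N^{l-2})\cdot O(N)=O(N^{l-1})$ --- and adding the $O(N^{l-1})$ points of $Z$, the total number of exceptional $t\in\mathbb{A}^l(\Z)$ with $H(t)\le N$ is $O(N^{l-1})+O(N^{l-1}\cdot\sqrt{N}\log N)=O(N^{l-1/2}\log N)$, as claimed. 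When $l=1$ there are no coordinates to freeze, and a sharper form of the sieve --- available because the estimate that $Y'\bmod p$ has an $\mathbb{F}_p$-point for at most $(1-\delta)p$ residues $\bar t_1$ then holds for \emph{all} large $p$ (at primes where the arithmetic monodromy strictly contains $M$ the omitted proportion only increases) --- removes the logarithm, giving $O(\sqrt{N})$.

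\medskip

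\noindent The delicate point, and the one I expect to absorb most of the work, is the uniformity in $p$ in the step above: one needs effective Lang--Weil (or function-field Chebotarev) bounds for the covers $Y_\alpha\bmod p$ and their Galois closures, with error term $O(\sqrt{p})$ whose implied constant is bounded solely in terms of $V$, $\phi$ and $k$, together with control of how the arithmetic monodromy group of each $Y_\alpha$ can exceed the geometric one as $p$ varies (and likewise for the restricted covers $Y'$, uniformly in the frozen coordinates). This is exactly what produces a \emph{single} constant $\delta>0$ and a \emph{single} finite set of excluded primes, and hence a legitimate input to the large sieve. By comparison, the incorporation of the auxiliary number field $k$ and the passage from $l=1$ to general $l$ are routine, provided one keeps track throughout of the dependence of all implied constants on the fixed data $V$, $\phi$, $k$.
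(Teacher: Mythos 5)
The paper does not prove Theorem~\ref{HIT}: it is quoted as a known quantitative form of Hilbert's irreducibility theorem due to Cohen \cite{Coh2}, with a further pointer to Serre \cite[Ch.~9]{Ser}, so there is no in-paper argument to compare against. Your sketch reconstructs precisely the Cohen/Serre large-sieve route (resolvent covers, freezing coordinates to reduce to one variable, function-field Chebotarev via Lang--Weil, then the arithmetic large sieve), which is indeed the method the citation refers to. The routine steps---passing to a monic irreducible $f(t,y)$ away from a codimension-one locus $Z$, the resolvent reformulation of reducibility and of non-disjointness from $k$, the use of geometric integrality of $V$ to ensure the geometric monodromy is transitive, Jordan's theorem, and the bookkeeping that sums the one-variable estimate over $O(N^{l-1})$ frozen coordinate choices---are all in order, and you correctly identify the uniformity in $p$ as the place where real work is needed.

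The one step you dismiss as routine, the removal of the $\log$ when $l=1$, is the one that is wrong as written. Your parenthetical claim that ``at primes where the arithmetic monodromy strictly contains $M$ the omitted proportion only increases'' is false: at such a prime $p$ the Frobenius at $\bar{t}_1$ lies in a nontrivial coset $\sigma M$ of $M$ in $\hat{M}_p$, and that coset may contain no fixed-point-free element at all. For instance, if $M=A_3$ acting on the three sheets and $\hat{M}_p=S_3$, then $\sigma M$ consists of the three transpositions, each of which fixes a sheet, so $Y'\bmod p$ has an $\mathbb{F}_p$-point above all but $O(\sqrt{p})$ residues $\bar{t}_1$---the omitted proportion collapses to essentially zero, and $p$ is useless for the sieve. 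One is therefore forced to restrict to the positive-density set of primes with $\hat{M}_p=M$, exactly as for $l\ge2$. Moreover, even granting $\#\Omega_p\ge\delta p$ for all large $p$, the full large-sieve denominator behaves like $L(Q)\gg Q(\log Q)^{c-1}$ with $c=\delta/(1-\delta)$, which is $o(Q)$ whenever $\delta<1/2$ (already the case for $M=S_d$, $d\ge3$, where the fixed-point-free proportion is about $1/e$); a residual power of $\log N$ survives, and your simplified bound $\ll N/\sum_{p\le Q}\delta$ hides this. So as written your argument yields $O(\sqrt{N}(\log N)^{1-\rho c})$ for $l=1$, not $O(\sqrt{N})$. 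Eliminating the logarithm in the one-variable case is the genuinely delicate part of Cohen's theorem and requires a separate refinement; it does not follow from the monodromy dichotomy you invoke.
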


A very natural problem that arises in connection with Hilbert's Irreducibility Theorem is to determine the number of distinct number fields $\Q(P_{t})$ with $H(t)\leq N$ in Theorem~\ref{HIT}.  For $l=1$ this problem has been studied by Dvornicich and Zannier.  Let $C$ be a curve over $\Q$ and let $\phi:C\to \mathbb{P}^1$ be a morphism.  For each integer $t\in \mathbb{A}^1(\Z)\subset \mathbb{P}^1(\Q)$, let $P_t=\phi^{-1}(t)$.  Dvornicich and Zannier \cite{Zan, Zan2} studied the degree of the field extension $\Q(P_1,\ldots,P_N)$.  Their results imply in particular a useful result on the number of isomorphism classes of number fields in the set $\{\Q(P_1),\ldots,\Q(P_N)\}$.
\begin{thm}[Dvornicich, Zannier]
\label{Zan}
Let $C$ be a curve over $\Q$.  Let $\phi:C\to\mathbb{P}^1$ be a morphism with $\deg \phi>1$.  For each integer $t$, let $P_t=\phi^{-1}(t)$.  Let $g(N)$ denote the number of isomorphism classes of number fields in the set $\{\Q(P_1),\ldots,\Q(P_N)\}$.  Then $g(N)\gg \frac{N}{\log N}$.
\end{thm}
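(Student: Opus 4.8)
The plan is to combine Hilbert's irreducibility theorem with a ramification argument that, for each number field $L$, pins down almost all of the integers $t$ that can produce $L\cong\Q(P_t)$. Write $d=\deg\phi$, and (replacing $C$ by its normalisation, which changes $\Q(P_t)$ for only finitely many $t$) assume $C$ is smooth. Spreading out and inverting a suitable finite set of primes $M$, realise $\phi$ as a finite morphism $\mathcal{C}\to\mathbb{A}^1_{\Z_M}$ with $\mathcal{C}$ smooth over $\Z_M$, and let $h\in\Z_M[t]$ be the branch polynomial of $\phi$ --- up to a unit, the radical of its discriminant. Since $\deg\phi>1$, Riemann--Hurwitz forces $\phi$ to ramify over some finite point, so $h$ is nonconstant. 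By Theorem~\ref{HIT}, applied with $k=\Q$, all but $O(\sqrt{N}\log N)$ integers $t\in[1,N]$ are \emph{good}: the fibre $P_t$ is a single reduced point with $[\Q(P_t):\Q]=d$.

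The key point is that, away from $M$, the primes ramifying in $\Q(P_t)$ are \emph{exactly} the prime divisors of $h(t)$. One direction is easy: a ramified prime $\ell\nmid M$ divides $\operatorname{disc}\Q(P_t)$, which divides the discriminant of the order $B_t$ obtained as the fibre of $\mathcal{C}$ over the section $t\colon\Spec\Z_M\to\mathbb{A}^1_{\Z_M}$, and that discriminant is divisible by $\ell$ only when $t\bmod\ell$ is a branch point, i.e.\ $\ell\mid h(t)$. For the other direction it is enough to treat $\ell>N$ (the only case needed): then $t\bmod\ell$ is a \emph{simple} root of $h\bmod\ell$ --- a multiple root would force $\ell\mid\operatorname{disc}h$ --- hence a simple, tame branch point of $\mathcal{C}_{\mathbb{F}_\ell}\to\mathbb{A}^1_{\mathbb{F}_\ell}$, and a short local computation shows that $B_t$ is then maximal at $\ell$ and that $\ell$ ramifies in $\Q(P_t)$.

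Next I bound the multiplicities of $t\mapsto\Q(P_t)$ using a large prime. Put
$$
\mathcal{A}=\{\,t\in[1,N]:\ t\ \text{good and}\ h(t)\ \text{is divisible by some prime}\ \ell>N\,\},
$$
and for $t\in\mathcal{A}$ fix such a prime $\ell_t$. If $t,t'\in\mathcal{A}$ and $\Q(P_t)\cong\Q(P_{t'})$, then $\ell_t$ ramifies in $\Q(P_{t'})$, so $\ell_t\mid h(t')$ by the preceding paragraph; but $h$ has at most $\deg h$ roots modulo $\ell_t$, and as $\ell_t>N$ each root contains at most one integer of $[1,N]$, so at most $\deg h$ values of $t'$ occur. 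Hence every fibre of $t\mapsto\Q(P_t)$ over $\mathcal{A}$ has size at most $\deg h$, and therefore
$$
g(N)\ \geq\ \#\{\,\Q(P_t):t\in\mathcal{A}\,\}\ \geq\ |\mathcal{A}|/\deg h.
$$

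It remains to show $|\mathcal{A}|\gg N/\log N$, which is the analytic heart of the matter and the step I expect to cost the most. Since the non-good $t$ number only $O(\sqrt{N}\log N)$, it suffices to produce $\gg N/\log N$ integers $t\in[1,N]$ with $h(t)$ divisible by a prime exceeding $N$. If $h$ has an irreducible factor of degree $\ge 2$, a positive proportion of $t\le N$ work, by standard estimates for smooth values of polynomials (a positive proportion of the relevant values fail to be $N$-smooth). If $h$ is a product of linear polynomials over $\Q$, apply the prime number theorem in arithmetic progressions to one linear factor $at+b$: it is prime --- hence $>N$ --- for $\gg N/\log N$ integers $t\le N$. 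In either case $|\mathcal{A}|\gg N/\log N$, which completes the proof. (The linear case is precisely why this argument yields only $N/\log N$; the local input needed for the ramification dictionary in the second paragraph also requires some care, but is routine.)
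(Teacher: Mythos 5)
This result is quoted from Dvornicich--Zannier; the paper does not reprove it but observes that it follows from their lower bound on the degree of the compositum $\Q(P_1,\ldots,P_N)$ (roughly, $[\Q(P_1,\ldots,P_N):\Q]\geq c^{N/\log N}$ for some $c>1$, combined with the fact that the Galois closures of the $\Q(P_t)$ have bounded degree, so few distinct isomorphism classes would force a too-small compositum). You take a genuinely different, more ``local'' route: Hilbert irreducibility plus a prime-by-prime ramification dictionary and a greatest-prime-factor count. That is a legitimate alternative strategy, and the multiplicity bound (at most $\deg h$ values of $t'$ with $\ell_t\mid h(t')$ when $\ell_t>N$) is correct and clean. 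But as written there is a real gap in the hard direction of your ramification dictionary, and the analytic input is heavier than you concede.

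The gap: you assert that for $\ell>N$ with $\ell\mid h(t)$, the residue $t\bmod\ell$ is a simple root of $h\bmod\ell$ and ``a short local computation shows that $B_t$ is then maximal at $\ell$ and that $\ell$ ramifies.'' Simplicity of the root of the \emph{radical} $h$ only tells you that $t\bmod\ell$ lies on a reduced component of the branch locus; it does \emph{not} control $v_\ell(h(t))$, which is what the local computation actually needs. Concretely, in the hyperelliptic case $y^2=f(x)$ with $f$ squarefree, $h$ is essentially $f$, and $\ell$ ramifies in $\Q(\sqrt{f(t)})$ iff $v_\ell(f(t))$ is odd. If $v_\ell(f(t))=2$ the prime $\ell$ is unramified and $B_t$ is not maximal at $\ell$, even though $t\bmod\ell$ is a simple root of $f\bmod\ell$. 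For $\deg h\leq 2$ the inequality $\ell^2>N^2\geq |h(t)|$ rescues you, but for $\deg h\geq 3$ one can have $\ell>N$, $t\leq N$, and $v_\ell(h(t))\geq 2$; ruling out enough of these exceptional pairs is exactly a ``square divisors of polynomial values'' problem, and the crude count (at most $\deg h$ such $t$ per prime $\ell$, summed over $\ell$ up to $N^{\deg h/2}$) gives a bound that is \emph{larger} than $N$ when $\deg h\geq 3$. So the argument as stated does not deliver $|\mathcal{A}|\gg N/\log N$ for $t$ at which $\ell_t$ actually ramifies; you need either a sieve estimate bounding the $t\leq N$ for which $h(t)$ has a square factor $\ell^2$ with $\ell>N$, or a reformulation that defines $\mathcal{A}$ directly via ramified primes and then counts it by other means.

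A smaller but related point: the claim that ``a positive proportion of $t\le N$ have $h(t)$ not $N$-smooth'' whenever $h$ has an irreducible factor of degree $\geq2$ is a real theorem in its own right (and what you ultimately need is the stronger statement that $h(t)$ has a prime factor $>N$ occurring \emph{to the first power}); calling it ``standard'' and citing no source leaves the analytic heart of your proof unsupported. By contrast, the cited Dvornicich--Zannier argument packages all of this into the single compositum-degree estimate, which is why the paper simply invokes it.
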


\begin{rmq}
The implied constants in Theorems~\ref{HIT} and \ref{Zan} are effective and can be explicitly computed.
\end{rmq}

\subsection{The main result}

Let $V$ be a variety over $\Q$. By Kummer theory, we have an exact sequence
$$
\begin{CD}
0 @>>> \gm(V)/m @>>> H^1(V,\mu_m) @>>> \Pic(V)[m] @>>> 0.\\
\end{CD}
$$
Given $L\in \Pic(V)[m]$, we say that a $\mu_m$-torsor on $V$ is a lift of $L$ if its image by the right-hand side map is equal to $L$.

\begin{thm}
\label{MainTheorem}
Let $V$ be a variety of dimension $l$ over $\Q$, and let $m>1$ be an integer. Assume that there exists a finite set $S$ of prime numbers such that, for any $L\in \Pic(V)[m]$, there exists a $\mu_m$-torsor $T\rightarrow V$ lifting $L$ such that, for any number field $K$ and for any point $P\in V(K)$, the pull-back $P^*T$ belongs to $H^1(\mathcal{O}_{K,S},\mu_m)$.

Let $\phi:V\rightarrow \mathbb{A}^l$ be a generically finite rational map. For $t\in \phi(V)(\Q)$, let $P_t:=\phi^{-1}(t)$.  Then for all but $O\left(N^{l-\frac{1}{2}}\log N\right)$ ($O(\sqrt{N})$ if $l=1$) points $t\in \mathbb{A}^l(\Z)$ with $H(t)\leq N$, we have $t\in \phi(V)$, $P_t=\Spec(\Q(P_t))$ where $\Q(P_t)$ is a number field of degree $deg(\phi)$, and
\begin{equation}
\label{MainTheoremEq}
\rk_m \Cl(\mathcal{O}_{\Q(P_t),S}) \geq \rk_m \Pic(V)_{\tors}+\# S- \rk \mathcal{O}_{\Q(P_t),S}^{\times}.
\end{equation}
\end{thm}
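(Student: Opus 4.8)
The plan is to combine the hypothesis on $V$ with Hilbert's irreducibility theorem applied to an auxiliary cover, and then track everything through Kummer theory. First I would fix a maximal family of independent $m$-torsion classes: let $r=\rk_m\Pic(V)_{\tors}=\rk_m\Pic(V)[m]$ and pick $L_1,\dots,L_r\in\Pic(V)[m]$ generating a subgroup isomorphic to $(\Z/m)^r$. By the standing assumption, each $L_i$ has a lift to a $\mu_m$-torsor $T_i\to V$ with the integrality property, i.e.\ $P^*T_i\in H^1(\mathcal{O}_{K,S},\mu_m)$ for every number field $K$ and every $P\in V(K)$. Form the fiber product $T=T_1\times_V\cdots\times_V T_r$ (or rather pass to the normalisation of $V$ in the compositum of the function fields, so as to get an irreducible cover of $V$); by construction $T\to V$ is generically finite, and a point $P\in V(K)$ lifts to a $K$-point of $T$ precisely when each class $P^*T_i$ is trivial. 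The key algebraic fact is that if $P^*$ is \emph{injective} on $\langle T_1,\dots,T_r\rangle\subset H^1(V,\mu_m)$, then the images $P^*T_i$ are $r$ independent elements of order $m$ in $H^1(\mathcal{O}_{\Q(P),S},\mu_m)$.

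Next I would invoke Hilbert irreducibility in the quantitative form of Theorem~\ref{HIT}. Composing $T\to V$ with the given generically finite rational map $\phi:V\to\mathbb{A}^l$ of degree $d=\deg\phi$ gives a generically finite map $T\to\mathbb{A}^l$; applying Theorem~\ref{HIT} to this composite (and also to $\phi$ itself, to control the degree of $\Q(P_t)$) shows that for all but $O(N^{l-1/2}\log N)$ integral points $t$ with $H(t)\le N$ we have $t\in\phi(V)$, the fibre $P_t$ is a single closed point with residue field $\Q(P_t)$ of degree $d$, and $P_t$ does \emph{not} lift to a $\Q(P_t)$-point of $T$ — more precisely, the fibre of $T$ over $P_t$ is again irreducible of the expected degree, which forces the subgroup $\langle P_t^*T_1,\dots,P_t^*T_r\rangle$ to have $m$-rank $r$. (One has to be slightly careful: Hilbert irreducibility gives irreducibility of the fibre, and one needs to translate this into injectivity of $P_t^*$ on the span of the $T_i$; this is where the group-scheme structure of a $\mu_m$-torsor is used — a proper nonzero subgroup of $(\Z/m)^r$ in the kernel would correspond to the cover $T\to V$ splitting off a nontrivial piece over $P_t$.)

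Finally I would pass from $H^1(\mathcal{O}_{K,S},\mu_m)$ to $\Cl(\mathcal{O}_{K,S})[m]$ using the Kummer exact sequence~\eqref{kummer}. Writing $K=\Q(P_t)$, the composite $c\circ P_t^*$ sends $\langle T_1,\dots,T_r\rangle$ into $\Cl(\mathcal{O}_{K,S})[m]$ with kernel contained in $\mathcal{O}_{K,S}^{\times}/m$. Since $\mathcal{O}_{K,S}^{\times}/m\cong(\Z/m)^{\rk\mathcal{O}_{K,S}^{\times}+1}$ when $\mu_m\subset K$ and of $m$-rank at most $\rk\mathcal{O}_{K,S}^{\times}+1$ in general, intersecting with an $(\Z/m)^r$ and using the elementary rank inequality (the analogue of Lemma~\ref{m_rank_sequence} referenced in the introduction) yields
\begin{equation*}
\rk_m\img(c\circ P_t^*)\ \ge\ r-\bigl(\rk\mathcal{O}_{K,S}^{\times}-\#S\bigr)\ =\ \rk_m\Pic(V)_{\tors}+\#S-\rk\mathcal{O}_{K,S}^{\times},
\end{equation*}
where the shift by $\#S$ comes from comparing $\mathcal{O}_{K,S}^{\times}$ with $\mathcal{O}_{K}^{\times}$ together with the $S$-units contributed by the primes in $S$; this is precisely the bound~\eqref{MainTheoremEq}. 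The main obstacle, I expect, is the middle step: making rigorous the implication ``fibre of $T$ over $P_t$ irreducible $\Rightarrow$ $P_t^*$ injective on $\langle T_1,\dots,T_r\rangle$,'' including the bookkeeping needed to ensure the exceptional set stays of size $O(N^{l-1/2}\log N)$ after intersecting finitely many Hilbert-irreducibility conditions, and handling the degenerate possibility that $\Q(P_t)$ already contains roots of unity in a way that inflates $\mathcal{O}_{K,S}^{\times}/m$ — this is the reason the introduction insists on choosing $K$ linearly disjoint from $\Q(\zeta_{2m})$, a condition that itself must be shown to hold for all but $O(N^{l-1/2}\log N)$ of the $t$.
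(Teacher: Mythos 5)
The overall architecture of your argument is right — lift $r$ independent $m$-torsion classes to $\mu_m$-torsors with the integrality property, form the composite cover $T\to V\to\mathbb{A}^l$, use the quantitative Hilbert irreducibility theorem to produce points $P_t$ for which $P_t^*$ is injective on $\langle T_1,\dots,T_r\rangle$, and then push through the Kummer sequence. You also correctly identify that irreducibility of the fibre of $T$ over $P_t$ is what translates into injectivity of $P_t^*$ on the span of the $T_i$, and you correctly flag the roots-of-unity subtlety, handled by linear disjointness from $\Q(\zeta_{2m})$.

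However, there is a genuine gap in the last step, and it is not a ``bookkeeping'' detail but the key device of the proof. Knowing that $P_t^*$ is injective on $\langle T_1,\dots,T_r\rangle\cong(\Z/m)^r$ and that $\ker c=\mathcal{O}_{K,S}^\times/m$ only gives
\begin{equation*}
\rk_m\img(c\circ P_t^*)\ \geq\ r-\rk_m\bigl(\mathcal{O}_{K,S}^\times/m\bigr),
\end{equation*}
and under linear disjointness from $\Q(\zeta_{2m})$ the right-hand deduction is $\rk_m(\mathcal{O}_{K,S}^\times/m)=\rk\mathcal{O}_{K,S}^\times$ (or $+1$ when $m=2$), \emph{not} $\rk\mathcal{O}_{K,S}^\times-\#S$. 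Your sentence ``the shift by $\#S$ comes from comparing $\mathcal{O}_{K,S}^\times$ with $\mathcal{O}_K^\times$'' is not a proof; there is no such free shift, and as written your bound is weaker than~\eqref{MainTheoremEq} by exactly $\#S$ (or $\#S+1$). What the paper actually does is ask for more than injectivity of $P_t^*$: it arranges that the classes $P_t^*T_i$ remain independent \emph{modulo the image of} $\Z_S^\times/m$ inside $H^1(\mathcal{O}_{K,S},\mu_m)$. This is achieved by applying Theorem~\ref{HIT} not over $\Q$ but over the auxiliary number field $k=\Q\bigl[\sqrt[m]{u}\ :\ u\in\Z_S^\times\bigr]$ (which contains $\Q(\zeta_{2m})$): HIT then produces fibres $\Spec F$ with $[kF:k]=[F:\Q]$, i.e.\ $F$ linearly disjoint from $k$, and Kummer theory converts this linear disjointness into triviality of the intersection $\langle P_t^*T_1,\dots,P_t^*T_r\rangle\cap(\Z_S^\times/m)$. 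One then works with the exact sequence
\begin{equation*}
0\longrightarrow(\mathcal{O}_{K,S}^\times/m)/(\Z_S^\times/m)\longrightarrow H^1(\mathcal{O}_{K,S},\mu_m)/(\Z_S^\times/m)\longrightarrow\Cl(\mathcal{O}_{K,S})[m]\longrightarrow 0,
\end{equation*}
where the left-hand term is now \emph{free} of rank $\rk\mathcal{O}_{K,S}^\times-\#S$, so the equality case of Lemma~\ref{m_rank_sequence} applies and delivers the correct bound. Your proof needs to introduce this auxiliary field $k$ and the quotient by $\Z_S^\times/m$; without them the argument cannot recover the $\#S$ in the statement.
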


\begin{proof}
Let $r=\rk_m \Pic(V)_{\tors}$. Using the hypotheses of the theorem, we obtain $r$ torsors $X_i\rightarrow V$ that generate a subgroup isomorphic to $(\Z/m)^r$ in $H^1(V,\mu_m)$, and that satisfy the key property: for any number field $K$ and for any point $P\in V(K)$, the pull-back $P^*X_i$ belongs to $H^1(\mathcal{O}_{K,S},\mu_m)$.

Let $X$ be the product of the $X_i$ over $V$. Then $f:X\rightarrow V$ is irreducible of degree $m^r$, because the $X_i\rightarrow V$ are irreducible and linearly independent in $H^1(V,\mu_m)$. Let $k=\Q[\sqrt[m]{u}~|~u\in \Z_S^\times]$.  One can apply Hilbert's irreducibility theorem to the composition $X\rightarrow V\rightarrow \mathbb{A}^l$ : for all but $O\left(N^{l-\frac{1}{2}}\log N\right)$ ($O(\sqrt{N})$ if $l=1$) points $t\in \phi(V)\cap \mathbb{A}^l(\Z)$ with $H(t)\leq N$, we have $(\phi\circ f)^{-1}(t)=\Spec(F)$ where $F$ is a field and $[F:\Q]=[kF:k]=(\deg\phi)(\deg f)$. Now, if $t$ is such a point, then $\phi^{-1}(t)=P\in V(K)$ with $K\subseteq F$ and $[K:\Q]=\deg(\phi)$. Then, pulling back the torsors $X_i\rightarrow V$ along $P$, we get $r$ independent elements in $H^1(K,\mu_m)$ that, by the key property, belong to the subgroup $H^1(\mathcal{O}_{K,S},\mu_m)$. In other words,
$$
\rk_m H^1(\mathcal{O}_{K,S},\mu_m)\geq r.
$$
In fact, since $F$ is linearly disjoint from $k$, we have
$$
\rk_m H^1(\mathcal{O}_{K,S},\mu_m)/(\Z_S^\times/m)\geq r,
$$
where we identify $ \Z_S^\times/m$ with its image under the composite map $\Z_S^\times/m\to \mathcal{O}_{K,S}^{\times}/m \to H^1(\mathcal{O}_{K,S},\mu_m)$.  Note that $K$ is also linearly disjoint from $k$, and in particular is linearly disjoint from the cyclotomic field $\Q(\zeta_{2m})$, which is contained in $k$. So the map $\Z_S^\times/m\to \mathcal{O}_{K,S}^{\times}/m$ is injective and
$$
(\mathcal{O}_{K,S}^{\times}/m)/(\Z_S^\times/m)\simeq(\Z/m)^{\rk \mathcal{O}_{K,S}^{\times}-\#S}.
$$
On the other hand, starting from the Kummer exact sequence \eqref{kummer}, we deduce an exact sequence
$$
0 \longrightarrow (\mathcal{O}_{K,S}^{\times}/m)/( \Z_S^\times/m)\longrightarrow  H^1(\mathcal{O}_{K,S},\mu_m)/(\Z_S^\times/m)
\longrightarrow  \Cl(\mathcal{O}_{K,S})[m] \longrightarrow 0.
$$
This implies, by the second statement of Lemma~\ref{m_rank_sequence}, that
\begin{align*}
\rk_m\Cl(\mathcal{O}_{K,S})[m]&=\rk_m H^1(\mathcal{O}_{K,S},\mu_m)/(\Z_S^\times/m) - (\rk \mathcal{O}_{K,S}^{\times}-\#S) \\
&\geq r+\#S-\rk \mathcal{O}_{K,S}^{\times},
\end{align*}
hence the result.
\end{proof}

\begin{rmq}
\begin{enumerate}
\item[$(i)$] In the theorem, the set $S$ \emph{a priori} depends on $V$ and on $m$. In the applications we give,
$V$ is smooth and in this case one may take $S$ to be the set of places of bad reduction of $V$.
Moreover, in the case of curves (cf. Corollary~\ref{corollary2}), it is possible to find a smaller $S$ by taking $m$ into account.

\item[$(ii)$] The inequality \eqref{MainTheoremEq} also holds for the group $\Cl(\Q(P_t))$, because $\Cl(\mathcal{O}_{\Q(P_t),S})$ is a quotient of that group.
\end{enumerate}
\end{rmq}

\begin{lem}
\label{m_rank_sequence}
Let
\begin{equation}
\label{abc}
0 \longrightarrow A \longrightarrow B \longrightarrow C \longrightarrow 0
\end{equation}
be an exact sequence of finite $m$-torsion abelian groups. Then
$$
\rk_m B \geq \rk_m A + \rk_m C.
$$
Moreover, if $A$ or $C$ is isomorphic to $(\Z/m)^n$ for some $n$, then
$$
\rk_m B = \rk_m A +\rk_m C.
$$
\end{lem}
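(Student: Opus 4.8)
The plan is to reduce the computation of $\rk_m$ to a family of $\mathbb{F}_p$-dimensions, one for each prime $p\mid m$, where additivity becomes ordinary linear algebra. First I would record the following consequence of the structure theorem for finite abelian groups: for a finite $m$-torsion abelian group $G$,
\[
\rk_m G \;=\; \min_{p\mid m}\,\dim_{\mathbb{F}_p}\bigl(\tfrac{m}{p}\,G\bigr).
\]
Here $\tfrac{m}{p}G$ is killed by $p$, so it is an $\mathbb{F}_p$-vector space; the point is that an embedding $(\Z/m)^r\hookrightarrow G$ exists if and only if it exists on each $p$-primary component $G_p$, which for a $p$-group of exponent dividing $p^{a}$ is equivalent to $r\le\dim_{\mathbb{F}_p}(p^{a-1}G_p)$, and $p^{a-1}G_p=\tfrac{m}{p}G$ when $p^{a}$ exactly divides $m$.

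For the inequality, fix $p\mid m$ and put $q=m/p$. Since $B\to C$ is surjective it restricts to a surjection $qB\to qC$, whose kernel is $qB\cap A$ (an element $qb\in qB$ is killed iff it lies in $\ker(B\to C)=A$, identifying $A$ with its image in $B$). As $B$ is $m$-torsion, $qB$, $qC$ and $qB\cap A$ are $\mathbb{F}_p$-vector spaces, so $\dim_{\mathbb{F}_p}(qB)=\dim_{\mathbb{F}_p}(qB\cap A)+\dim_{\mathbb{F}_p}(qC)$, and since $qA\subseteq qB\cap A$ we obtain $\dim_{\mathbb{F}_p}(qB)\ge\dim_{\mathbb{F}_p}(qA)+\dim_{\mathbb{F}_p}(qC)$, with equality precisely when $qB\cap A=qA$. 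Taking the minimum over $p\mid m$ and using $\min_p(x_p+y_p)\ge\min_p x_p+\min_p y_p$ gives $\rk_m B\ge\rk_m A+\rk_m C$.

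For the equality statement I would treat the two hypotheses separately. If $C\cong(\Z/m)^n$, then $C$ is a free, hence projective, $\Z/m$-module, so \eqref{abc} splits (lift a basis of $C$ to $B$; the lift is a homomorphism because $B$ is $m$-torsion), giving $B\cong A\oplus(\Z/m)^n$; applying $\tfrac{m}{p}(\cdot)$ and using $\tfrac{m}{p}\Z/m\Z\cong\Z/p$ yields $\dim_{\mathbb{F}_p}(\tfrac{m}{p}B)=\dim_{\mathbb{F}_p}(\tfrac{m}{p}A)+n$ for every $p\mid m$, and minimizing gives $\rk_m B=\rk_m A+n$. If instead $A\cong(\Z/m)^n$, the sequence need not split, but since $p\mid m$ one has $(\Z/m)[p]=\tfrac{m}{p}\Z/m\Z$, hence $A[p]=\tfrac{m}{p}A$; because $qB$ is $p$-torsion, $qB\cap A\subseteq A[p]=qA$, so $qB\cap A=qA$ and the inequality above is an equality for every $p\mid m$, giving $\dim_{\mathbb{F}_p}(qB)=n+\dim_{\mathbb{F}_p}(qC)$ and hence $\rk_m B=n+\rk_m C$.

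The point to be careful about is that $\rk_m$ is genuinely not additive on direct sums in general (for instance $\rk_6(\Z/2\oplus\Z/3)=1\ne 0+0$), so passing to $\mathbb{F}_p$-dimensions is exactly what makes additivity available; correspondingly, the equality case really uses the hypothesis, since it is precisely the fact that one of $A$, $C$ has $\tfrac{m}{p}(\cdot)$-dimension equal to the constant $n$ for all $p\mid m$ that allows the minimum to distribute over the sum. Everything else is routine diagram-chasing among $\mathbb{F}_p$-vector spaces.
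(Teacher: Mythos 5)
Your proof is correct, and while it shares the projectivity-splitting argument for the case $C\cong(\Z/m)^n$, it departs from the paper in two substantive ways. First, for $A\cong(\Z/m)^n$ the paper invokes injectivity of $(\Z/m)^n$ as a $\Z/m$-module to split the sequence again, whereas you avoid splitting entirely and argue directly that $qB\cap A = qA$ via $A[p]=qA$; this is arguably cleaner since it does not rely on the less familiar fact that $\Z/m$ is self-injective. Second, and more importantly, the paper proves the general inequality by pulling \eqref{abc} back along an embedded $(\Z/m)^n\hookrightarrow C$ with $n=\rk_m C$ and invoking the already-proved equality case, while you give a direct linear-algebra argument at each prime $p\mid m$ using the surjection $qB\to qC$ and $\min_p(x_p+y_p)\ge\min_p x_p+\min_p y_p$. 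The paper also states ``without loss of generality, $m=p^e$'' rather tersely before proving additivity of $\rk_m$ over direct sums for prime powers; your explicit reduction formula $\rk_m G=\min_{p\mid m}\dim_{\mathbb{F}_p}\bigl(\tfrac{m}{p}G\bigr)$ makes that WLOG step transparent, and your closing remark about the non-additivity of $\rk_m$ (illustrated by $\rk_6(\Z/2\oplus\Z/3)=1$) pinpoints exactly why the constancy of the offset $n$ across primes is what makes the minimum distribute --- a subtlety the paper relies on but does not articulate. Both proofs are correct; yours is more uniform and self-contained, while the paper's is shorter by leaning on the projectivity/injectivity splittings and leaving the prime-by-prime bookkeeping implicit.
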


\begin{proof}
Let us first note that $m$-torsion abelian groups are modules over the ring $\Z/m$. It follows from the definition that, if $M$ is a finite $m$-torsion abelian group, and if $N$ is a subgroup or a quotient of $M$, then $\rk_m N\leq \rk_m M$.

We prove the last statement first. Let us assume that  $C$ is isomorphic to $(\Z/m)^n$ for some $n$. This means that $C$ is a free $(\Z/m)$-module of finite rank, thus is a projective object in the category of $(\Z/m)$-modules. Therefore, by basic homological algebra, the exact sequence \eqref{abc} splits, that is,
$$
B\simeq (\Z/m)^n\oplus A.
$$

Without loss of generality, we may assume that $m=p^e$ where $p$ is prime and $e\geq 1$. Using the structure theorem for finite abelian groups, we find that
$$
\rk_m M = \dim_{\mathbb{F}_p} p^{e-1}M
$$
for any $(\Z/m)$-module $M$. It follows that we have, for any two $(\Z/m)$-modules $M$ and $N$,
$$
\rk_m(M\oplus N)=\rk_m(M)+\rk_m(N).
$$
Finally, we get that $\rk_m B=n+\rk_m A$, as desired.

The case when $A$ is isomorphic to $(\Z/m)^n$ for some $n$ is similar to the previous one; indeed, it follows from the structure theorem for finite abelian groups that $(\Z/m)^n$ is an injective object in the category of $(\Z/m)$-modules, and so once again the exact sequence \eqref{abc} splits.

In the general case, let $n=\rk_m C$ and let us choose a subgroup of $C$ isomorphic to $(\Z/m)^n$ (we note that such a subgroup is not unique in general). Then, pulling-back the sequence \eqref{abc} by the inclusion $(\Z/m)^n\rightarrow C$, we get an exact sequence
$$
0 \longrightarrow A \longrightarrow B_1 \longrightarrow (\Z/m)^n \longrightarrow 0
$$
where $B_1$ is some subgroup of $B$. In particular, $\rk_m B\geq \rk_m B_1$. Now, by the second statement of the Lemma, we have $\rk_m B_1 = \rk_m A +n=\rk_m A+\rk_m C$, from which the result follows.
\end{proof}

\subsection{Corollaries}

Let $V$ be a smooth projective variety over $\Q$. We say that $V$ has good reduction at a prime $p$ if there exists a smooth projective model of $V$ over $\Spec(\Z_{(p)})$.  Otherwise, we say that $V$ has bad reduction at $p$.

It is well-known that the set $S$ of primes of bad reduction of $V$ is finite. Moreover, it is possible to construct a smooth projective model $\V\rightarrow \Spec(\Z_S)$.

\begin{lem}
\label{good_model}
Let $V$ be a smooth projective variety over $\Q$. Let $S$ be the set of primes of bad reduction of $V$. Then there exists a smooth projective model $\V\rightarrow \Spec(\Z_S)$ of $V$. Moreover, any such model satisfies
$$
\V(\mathcal{O}_{K,S})=V(K)
$$
for any number field $K$, and
$$
\Pic(\V)=\Pic(V).
$$
\end{lem}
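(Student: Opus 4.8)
The strategy is to prove the three claims one after another. The existence of a smooth projective model over $\Z_S$ is a gluing statement; the identity $\V(\mathcal O_{K,S})=V(K)$ comes from the valuative criterion of properness, applied one prime at a time; and $\Pic(\V)=\Pic(V)$ comes from the localisation exact sequence for divisor class groups together with the observation that the vertical fibres of $\V\to\Spec(\Z_S)$ are principal divisors.

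For existence I would begin with \emph{any} projective model of $V$ over $\Z_S$, for instance the schematic closure $\V_0$ in $\mathbb{P}^n_{\Z_S}$ of a projective embedding $V\hookrightarrow\mathbb{P}^n_\Q$ (this is automatically flat over $\Z_S$). Its non-smooth locus is closed and misses the generic fibre, hence lies over a finite set of primes $p_1,\dots,p_k\notin S$. At each $p_i$, good reduction supplies a smooth projective model over $\Z_{(p_i)}$, which descends by a routine spreading-out argument to a smooth projective model over $\Z[1/N_i]$ for a suitable $N_i$ prime to $p_i$; one then glues these to $\V_0$ over the good primes. The delicate point — and, I expect, the main obstacle — is that this gluing is only legitimate once one knows that a smooth projective model of $V$ over a discrete valuation ring is unique when it exists; this is harmless in the curve case used in the applications, but needs justification (or a different argument) in general.

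For $\V(\mathcal O_{K,S})=V(K)$: the ring $\mathcal O_{K,S}$ is the integral closure of $\Z_S$ in $K$, hence a one-dimensional regular (Dedekind) domain with fraction field $K$ admitting a structure morphism to $\Spec(\Z_S)$. Restriction to the generic point is an injection $\V(\mathcal O_{K,S})\hookrightarrow\V_\eta(K)=V(K)$, because $\V$ is separated and $\Spec(K)$ is dense in $\Spec(\mathcal O_{K,S})$. For surjectivity I would take $P\in V(K)$, view it as a $\Z_S$-morphism $\Spec(K)\to\V$, and, localising $\mathcal O_{K,S}$ at each of its maximal ideals, apply the valuative criterion of properness to the projective (hence proper) morphism $\V\to\Spec(\Z_S)$: this yields a unique local extension at every prime of $\mathcal O_{K,S}$, and by uniqueness these patch over the Dedekind base $\Spec(\mathcal O_{K,S})$ to a morphism extending $P$.

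For $\Pic(\V)=\Pic(V)$: since $\V$ is smooth over the regular base $\Spec(\Z_S)$ it is regular, so $\Pic(\V)=\Cl(\V)$, and likewise $\Pic(V)=\Cl(V)$. As a scheme, $V$ is the complement in $\V$ of all special fibres, $V=\V\setminus\bigcup_{p\notin S}\V_p$, so the localisation sequence for divisor class groups gives exactness of
\[
\bigoplus_{p\notin S}\Z\cdot[\V_p]\longrightarrow\Cl(\V)\longrightarrow\Cl(V)\longrightarrow 0 .
\]
I would then check that each $[\V_p]=0$. Each $\V_p$ is integral: it is regular (being smooth over $\mathbb{F}_p$) and connected, by Stein's theorem, since $f_*\mathcal O_\V=\mathcal O_{\Spec(\Z_S)}$ — indeed $f_*\mathcal O_\V$ is finite and torsion-free over $\Z_S$ with generic fibre $H^0(V,\mathcal O_V)=\Q$, hence equals $\Z_S$. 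Thus $\V_p$ is the only vertical prime divisor over $p$; and, $\V$ being flat over $\Z_S$ and $\V_p$ reduced, the rational function $p$ on the integral scheme $\V$ has divisor exactly $[\V_p]$, which is therefore principal. Consequently the left-hand map above is zero and $\Cl(\V)\xrightarrow{\ \sim\ }\Cl(V)$, i.e. $\Pic(\V)=\Pic(V)$.
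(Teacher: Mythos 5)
Your plan matches the paper's closely and the core arguments coincide. For $\V(\mathcal{O}_{K,S}) = V(K)$, both you and the paper invoke the valuative criterion of properness; for $\Pic(\V) = \Pic(V)$, the paper argues directly that divisors on $V$ extend to $\V$ by schematic closure, that the kernel of restriction is generated by vertical prime divisors, that each fibre is integral (smooth over $\mathbb{F}_p$ and connected by Zariski's connectedness theorem), and that each fibre class is principal since $\Z_S$ is a PID — which is precisely what your localisation-sequence formulation packages. Two small points on your version of the $\Pic$ step. First, $V$ is the generic fibre of $\V$ and \emph{not} an open subscheme (it is the complement of infinitely many closed fibres, and that complement is not open in $\Spec(\Z_S)$), so the sequence $\bigoplus_p \Z\cdot[\V_p]\to\Cl(\V)\to\Cl(V)\to 0$ should either be read as a colimit over open subsets $U\subseteq\Spec(\Z_S)$ or replaced by the paper's direct observation about horizontal versus vertical prime divisors. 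Second, the step ``$f_*\mathcal{O}_\V$ is finite torsion-free over $\Z_S$ with generic fibre $\Q$, hence equals $\Z_S$'' is not purely a module-rank count: one needs that $f_*\mathcal{O}_\V$ is a \emph{subring} of $\Q$, finite hence integral over $\Z_S$, and that $\Z_S$ is integrally closed in $\Q$.

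On existence of the model, the paper gives no proof inside the lemma either — it asserts it as well-known just before the statement. Your gluing sketch is the right idea, but I would reframe the obstruction you flag. Uniqueness of smooth projective models over a DVR is \emph{not} what is needed to make the gluing consistent: a morphism between flat separated $\Z_S$-schemes is determined by its restriction to the generic fibre, so once you spread out the identity of $V$ the transition isomorphisms are automatically compatible on overlaps, and you obtain a smooth \emph{proper} model over $\Z_S$. The genuine delicate point, which your sketch does not address, is that projectivity is not Zariski-local on the base: the glued proper scheme is not automatically projective, and one must still produce a relatively ample line bundle globally. That is the real gap in the existence argument, and it is one the paper sidesteps by treating existence as standard.
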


\begin{proof}
The existence of a smooth projective model $\V\rightarrow \Spec(\Z_S)$ was noted above. The projectivity of $\V$ ensures us that any morphism $P:\Spec(K)\rightarrow V$ extends to a morphism $\Spec(\mathcal{O}_{K,S})\rightarrow \V$ (valuative criterion of properness).

Because $\V$ is regular, any divisor on $V$ can be extended (by scheme-theoretic closure) to a divisor on $\V$. Thus, the restriction map
$$
\Pic(\V)\rightarrow\Pic(V)
$$
is surjective. The kernel of this map is the subgroup of $\Pic(\V)$ generated by fibral (or ``vertical'') divisors. By assumption $V$ is geometrically connected, so by Zariski's connectedness theorem $\V$ has connected fibers. Therefore, $\V$ has integral fibers (because smooth and connected implies integral), so any fibral divisor is a sum of fibers. In other words, any fibral divisor is the inverse image of a divisor on $\Spec(\Z_S)$. Such a divisor is principal, because $\Z_S$ is principal. So the subgroup generated by fibral divisors is trivial, and the restriction map is bijective.
\end{proof}

\begin{cor}
\label{corollary1}
Let $V$ be a smooth projective variety over $\Q$. Then, for any $m>1$, the hypotheses of Theorem~\ref{MainTheorem} hold for $V$ and $m$ when taking $S$ to be the set of primes of bad reduction of $V$.
\end{cor}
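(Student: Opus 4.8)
The plan is to pass to a smooth projective model over $\Z_S$ and lift line bundles by Kummer theory \emph{there}, rather than over $V$. Take $S$ to be the set of primes of bad reduction of $V$, and let $\V\to\Spec(\Z_S)$ be a smooth projective model as furnished by Lemma~\ref{good_model}; recall that this lemma gives $\Pic(\V)=\Pic(V)$ and $\V(\mathcal{O}_{K,S})=V(K)$ for every number field $K$.

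Given $L\in\Pic(V)[m]=\Pic(\V)[m]$, I would invoke the fppf Kummer sequence on $\V$, namely the exact sequence
$$
0\longrightarrow\gm(\V)/m\longrightarrow H^1(\V,\mu_m)\longrightarrow\Pic(\V)[m]\longrightarrow 0,
$$
to produce a $\mu_m$-torsor $\mathcal{T}\to\V$ whose image in $\Pic(\V)[m]$ is $L$. Let $T\to V$ be the restriction of $\mathcal{T}$ along the open immersion $V\hookrightarrow\V$. By functoriality of the Kummer sequence with respect to this immersion, the class of $T$ in $H^1(V,\mu_m)$ maps to the restriction of $L$, which is $L$ again; hence $T$ is a lift of $L$ in the sense of the discussion preceding Theorem~\ref{MainTheorem}.

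It remains to check the integrality of pull-backs. Let $K$ be a number field and $P\in V(K)$. By Lemma~\ref{good_model}, $P$ extends to a morphism $\widetilde{P}:\Spec(\mathcal{O}_{K,S})\to\V$ whose restriction to the generic point is $P:\Spec(K)\to V$. Pulling back $\mathcal{T}$ along $\widetilde{P}$ gives a class $\widetilde{P}^*\mathcal{T}\in H^1(\mathcal{O}_{K,S},\mu_m)$ whose image under the restriction map $H^1(\mathcal{O}_{K,S},\mu_m)\to H^1(K,\mu_m)$ equals $P^*T$, again by functoriality. Since, as recalled in \S\ref{notation_def}, $H^1(\mathcal{O}_{K,S},\mu_m)$ is canonically a subgroup of $H^1(K,\mu_m)$ via exactly this restriction map, we conclude $P^*T\in H^1(\mathcal{O}_{K,S},\mu_m)$, which is the key property needed. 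Thus the hypotheses of Theorem~\ref{MainTheorem} hold for $V$, $m$ and this $S$.

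I do not expect a genuine obstacle: the argument merely assembles the geometric inputs of Lemma~\ref{good_model} with the naturality of the Kummer exact sequence and of pull-back along sections. The one point deserving explicit verification is precisely this naturality — that restriction to an open subscheme and pull-back along an integral section commute with the connecting maps in the relevant Kummer sequences — so that the assertions ``$T$ lifts $L$'' and ``$\widetilde{P}^*\mathcal{T}$ restricts to $P^*T$'' follow formally.
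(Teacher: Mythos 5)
Your proposal is correct and follows essentially the same route as the paper: lift $L$ via the fppf Kummer sequence on the integral model $\V$ supplied by Lemma~\ref{good_model}, restrict to the generic fiber to obtain the lift $T$, and use the extension $\widetilde{P}$ of $P$ (valuative criterion) together with functoriality of Kummer theory to place $P^*T$ inside $H^1(\mathcal{O}_{K,S},\mu_m)$. The paper packages the same content as two commutative diagrams comparing the Kummer sequences over $\V$ and $V$ and the pull-back maps over $\mathcal{O}_{K,S}$ and $K$; your write-up makes these naturality checks explicit in prose, but there is no substantive difference.
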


\begin{proof}
Let $\V\rightarrow \Spec(\Z_S)$ be the model of $V$ as in Lemma~\ref{good_model}. By Kummer theory, we have a diagram with exact lines
$$
\begin{CD}
0 @>>> \gm(\V)/m @>>> H^1(\V,\mu_m) @>>> \Pic(\V)[m] @>>> 0\\
@. @VVV @VVV @| \\
0 @>>> \gm(V)/m @>>> H^1(V,\mu_m) @>>> \Pic(V)[m] @>>> 0\\
\end{CD}
$$
where the vertical lines are obtained by restriction to the generic fiber.
So, starting from $L\in \Pic(V)[m]$, we get a $\mu_m$-torsor $\mathcal{T}\rightarrow \V$ whose generic fiber $T\rightarrow V$ is a lift of $L$.

Now, if $P:\Spec(K)\rightarrow V$ is a point, we have a commutative diagram
$$
\begin{CD}
H^1(\V,\mu_m) @>P^*>> H^1(\mathcal{O}_{K,S},\mu_m) \\
@VVV @VVV \\
H^1(V,\mu_m) @>P^*>> H^1(K,\mu_m) \\
\end{CD}
$$
where the horizontal lines are pull-backs along $P$. This proves that $P^*T$ belongs to the subgroup $H^1(\mathcal{O}_{K,S},\mu_m)$.
\end{proof}

\begin{rmq}
If $A$ is an abelian variety over $\Q$, then $A(\Q)=\Pic^0(A^t)$ where $A^t$ is the dual abelian variety of $A$. It seems interesting to apply Corollary~\ref{corollary1} to this situation (i.e. taking $V=A^t$). But then one has to bound the degree of a generically finite rational map $A^t\rightarrow \mathbb{A}^l$, where $l=\dim(A)$.
\end{rmq}

Let us recall that, if $C$ is a curve defined over $\Q$, the index $\ind(C)$ of $C$ is, by definition, the quantity
$$
\ind(C) = \gcd\{[K:\Q]~|~ C(K)\neq\emptyset\}.
$$
Of course, if $C(\Q)\neq\emptyset$, then $\ind(C)=1$. The converse is false.

We now assume that $C$ is a smooth projective curve of genus $g\geq 1$. Then $C$ has a minimal regular model $\C\rightarrow \Spec(\Z)$.

Let $p$ be any prime number, and let $\Gamma_1,\dots,\Gamma_r$ be the irreducible components of the fiber of $\C$ at $p$. For each $i$, let $\delta_i$ be the geometric multiplicity of $\Gamma_i$ in the fiber of $\C$ at $p$ (see \cite[\S{}9.1, Def.~3]{NeronModels}). We let
$$
I_p(C):=\gcd(\delta_1,\dots,\delta_r).
$$

It is well-known that $I_p(C)$ divides $\ind(C)$ for all $p$, cf. \cite[Cor.~1.5]{BoschLiu}.

Let $\Jac(C)$ be the Jacobian variety of the curve $C$.  This is an abelian variety of dimension $g$. For each prime number $p$, we let $\Phi_p$ be the group of connected components of the fiber at $p$ of the N\'eron model of $\Jac(C)$, and we denote by
$$
t_p(\Jac(C)):=\# \Phi_p(\mathbb{F}_p)
$$
the order of the group of $\mathbb{F}_p$-valued points of $\Phi_p$. Usually, $t_p(\Jac(C))$ is called the Tamagawa number of $\Jac(C)$ at $p$.

\begin{propdfn}
\label{better_set}
Let $C$ be a smooth projective curve of genus $g\geq 1$ over $\Q$, and let $m>1$. We let $U(C,m)\subseteq \Spec(\Z)$ be the set of primes $p$ such that $I_p(C)=1$ and $t_p(\Jac(C))$ is coprime to $m$.
Then the following holds:
\begin{enumerate}
\item[$(i)$] If $g\geq 2$, the set $U(C,m)$ contains the set of primes of good reduction of $\Jac(C)$.
\item[$(ii)$] If $\ind(C)=1$ then $I_p(C)=1$ for all $p$. Hence, $U(C,m)$ is the set of primes $p$ such that $t_p(\Jac(C))$ is coprime to $m$.
\end{enumerate}
\end{propdfn}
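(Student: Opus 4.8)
Part (ii) is essentially immediate from a fact already recalled above: $I_p(C)$ divides $\ind(C)$ for every prime $p$ \cite[Cor.~1.5]{BoschLiu}. Hence if $\ind(C)=1$ then $I_p(C)\mid 1$, i.e. $I_p(C)=1$, for all $p$, and the stated description of $U(C,m)$ is then just its definition. All the content is in part (i), which I now address. Fix a prime $p$ at which $\Jac(C)$ has good reduction; I must show $p\in U(C,m)$, that is, $I_p(C)=1$ and $t_p(\Jac(C))$ is coprime to $m$ — in fact I will show $t_p(\Jac(C))=1$.

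The Tamagawa number is the easy half. By definition of good reduction, the N\'eron model of $\Jac(C)$ over $\Z_{(p)}$ is an abelian scheme; in particular its special fibre is connected, so the component group $\Phi_p$ is trivial and $t_p(\Jac(C))=\#\Phi_p(\mathbb{F}_p)=1$, which is coprime to $m$.

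It remains to prove $I_p(C)=1$, and this is where the hypothesis $g\geq 2$ enters decisively. Since the N\'eron model of $\Jac(C)$ at $p$ is an abelian scheme it is \emph{a fortiori} semiabelian, so by the Deligne--Mumford stable reduction criterion (see \cite{NeronModels}) — which requires $g\geq 2$ — the curve $C$ itself has stable reduction over $\Spec(\Z_{(p)})$. Let $\mathcal{C}^{\mathrm{st}}\to\Spec(\Z_{(p)})$ be the stable model. Its special fibre is a stable curve over $\mathbb{F}_p$, hence geometrically reduced with at worst ordinary double points, and its only singularities as a surface are rational double points of type $A_n$ located at those nodes. The minimal regular model of $C$ over $\Z_{(p)}$ — that is, $\C$ base-changed to $\Z_{(p)}$ — is the minimal desingularisation of $\mathcal{C}^{\mathrm{st}}$: resolving each $A_n$-point inserts a chain of $\mathbb{P}^1$'s, and every component of such a chain occurs with multiplicity one in the special fibre. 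Thus the special fibre of $\C$ over $\Z_{(p)}$ is geometrically reduced, every geometric multiplicity $\delta_i$ equals $1$, and $I_p(C)=\gcd_i\delta_i=1$. This gives $p\in U(C,m)$ and proves (i).

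\textbf{Main obstacle.} The delicate step is the passage, via Deligne--Mumford, from semiabelian reduction of $\Jac(C)$ to \emph{stable} reduction of $C$ over $\Z_{(p)}$ itself (not merely over a ramified extension); this comparison genuinely fails for $g=1$. Indeed, for $g=1$ part (i) is false: a genus-one curve whose Jacobian has good reduction at $p$ may still have minimal regular model of Kodaira type ${}_{m}I_{0}$, a multiplicity-$m$ smooth genus-one curve, so that $I_p(C)=m>1$ — which is precisely why part (ii) must instead assume $\ind(C)=1$. The residual bookkeeping, namely that resolving the $A_n$-singularities of the stable model introduces no fibral component of multiplicity greater than one, is routine and can alternatively be extracted from the structure theory of minimal regular models of semistable curves in \cite{NeronModels}.
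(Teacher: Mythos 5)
Your argument is correct and follows essentially the same route as the paper: part (ii) is the divisibility $I_p(C)\mid\ind(C)$ from \cite[Cor.~1.5]{BoschLiu}, and part (i) rests on the Deligne--Mumford criterion \cite[Thm.~2.4]{DM} that for $g\geq 2$ semi-abelian (here abelian) reduction of $\Jac(C)$ at $p$ forces semi-stable reduction of $C$ over $\Z_{(p)}$, whence the minimal regular model has reduced special fibre and $I_p(C)=1$. Your extra bookkeeping (triviality of $\Phi_p$, resolution of the $A_n$-nodes of the stable model, the $g=1$ Kodaira ${}_mI_0$ caveat) is accurate but only elaborates on the paper's one-line argument.
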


\begin{proof}
If $g\geq 2$ and $\Jac(C)$ has good reduction at $p$, then it follows from \cite[Thm. 2.4]{DM} that the curve $\C$ is semi-stable at $p$, hence $I_p(C)=1$. This proves $(i)$. According to \cite[Cor.~1.5]{BoschLiu}, $I_p(C)$ divides $\ind(C)$ for all $p$, hence $(ii)$.
\end{proof}

\begin{cor}
\label{corollary2}
Let $C$ be a smooth projective curve of genus $g\geq 1$ over $\Q$, and let $m>1$. Let $S$ be the complement of $U(C,m)$ in $\Spec(\Z)$. Let $\phi:C\to \mathbb{A}^1$ be a rational map. Then (with the notation of Theorem~\ref{MainTheorem}), for all but $O(\sqrt{N})$ values $t=1,\dots,N$, we have $[\Q(P_t):\Q]=\deg \phi$ and
$$
\rk_m\Cl(\Q(P_t))\geq \rk_m \Pic(C)_{\tors}+\# S- \rk \mathcal{O}_{\Q(P_t),S}^{\times}.
$$
\end{cor}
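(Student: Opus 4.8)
The plan is to deduce the corollary from Theorem~\ref{MainTheorem} applied with $l=1$. Assuming --- as we may, the assertion being otherwise empty --- that $\phi$ is non-constant, it is automatically generically finite; and since $\Cl(\mathcal{O}_{\Q(P_t),S})$ is a quotient of $\Cl(\Q(P_t))$ it has $m$-rank at most that of $\Cl(\Q(P_t))$ (cf.\ the remark following Theorem~\ref{MainTheorem}). So the whole point is to verify the hypotheses of Theorem~\ref{MainTheorem} for $C$, for $m$, and for the set $S=\Spec(\Z)\setminus U(C,m)$. That $S$ is finite is clear: for all but finitely many $p$, the curve $C$ has good reduction as a scheme (hence $I_p(C)=1$) and $\Jac(C)$ has good reduction (hence $t_p(\Jac(C))=1$), so $U(C,m)$ is cofinite.

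Since the smooth model of Lemma~\ref{good_model} is unavailable, I would use instead the minimal regular model $\C\to\Spec\Z$ and put $\C_S:=\C\times_{\Spec\Z}\Spec\Z_S$, a regular and proper --- but in general non-smooth --- $\Z_S$-scheme. By the valuative criterion of properness, every $P\in C(K)$ extends to $\tilde P\in\C_S(\mathcal{O}_{K,S})$, and then, just as in the proof of Corollary~\ref{corollary1}, the restriction $T\to C$ of any $\mu_m$-torsor $\mathscr{T}\to\C_S$ satisfies $P^*T\in H^1(\mathcal{O}_{K,S},\mu_m)$ for every number field $K$ and every $P\in C(K)$. It is therefore enough to show that every class in $\Pic(C)[m]$ is the restriction of an $m$-torsion line bundle on $\C_S$: the $m$-th power of such a line bundle is trivial, so by fppf Kummer theory over $\C_S$ it is the image of some $\mu_m$-torsor $\mathscr{T}$, and $T:=\mathscr{T}|_C$ then lifts the given class, as Theorem~\ref{MainTheorem} requires. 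In other words, it suffices to prove that the restriction map $\Pic(\C_S)[m]\to\Pic(C)[m]$ is surjective.

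This map fits in the exact sequence $0\to\Psi\to\Pic(\C_S)\to\Pic(C)\to0$, where surjectivity holds because $\C_S$ is regular (extend divisors by scheme-theoretic closure) and where the kernel $\Psi$ is the group of fibral divisor classes: since $\Z_S$ is principal, the only relations among the components $\Gamma_{p,i}$ of the fibres come from the fibres $\C_{S,p}$ themselves, so $\Psi=\bigoplus_{p\notin S}\Psi_p$ with $\Psi_p=\big(\bigoplus_i\Z\,\Gamma_{p,i}\big)/\Z\,\C_{S,p}$. As $p\notin S$ forces $I_p(C)=1$, and the residue field $\mathbb{F}_p$ is perfect (so that the geometric multiplicity of each $\Gamma_{p,i}$ is just its multiplicity $r_{p,i}$), the vector $(r_{p,i})_i$ is primitive, whence each $\Psi_p$, and hence $\Psi$, is torsion-free. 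Applying the snake lemma for multiplication by $m$ then yields an exact sequence $0\to\Pic(\C_S)[m]\to\Pic(C)[m]\xrightarrow{\ \delta\ }\Psi/m$, so the surjectivity we want is equivalent to the vanishing of $\delta=\bigoplus_{p\notin S}\delta_p$. Given $L\in\Pic(C)[m]$ with any lift $\tilde L\in\Pic(\C_S)$, one has $m\tilde L\in\Psi$, and $\delta(L)$ is its image in $\Psi/m$; computing the intersection numbers $(m\tilde L\cdot\Gamma_{p,i})=m\,(\tilde L\cdot\Gamma_{p,i})$ and using $(\tilde L\cdot\C_{S,p})=0$ --- the fibre being a principal divisor --- one finds that $\delta_p(L)$ is governed, through the intersection matrix of the fibre, by the specialisation of $L$ to the group of components of the Néron model of $\Jac(C)$ at $p$.

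The crux --- and the step I expect to be the main obstacle --- is the precise statement that, because $L$ is the restriction of a line bundle on $C$, this specialisation takes values in the subgroup $\Phi_p(\mathbb{F}_p)$ of $\mathbb{F}_p$-rational components, of order $t_p(\Jac(C))$, and that $\delta_p(L)=0$ if and only if this $\Phi_p(\mathbb{F}_p)$-valued specialisation is trivial. This is exactly the content of Raynaud's description of the component group via the intersection matrix of the minimal regular model (see \cite[\S9]{NeronModels}) together with Bosch and Liu's study of its group of rational points (see \cite{BoschLiu}), and it is what makes the Tamagawa numbers, rather than the full geometric component groups, the relevant invariant in the definition of $U(C,m)$. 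Granting it: $\delta_p(L)$ lies in a group annihilated both by $m$ (as $L\in\Pic(C)[m]$) and by $t_p(\Jac(C))$, and $\gcd(m,t_p(\Jac(C)))=1$ since $p\notin S$ belongs to $U(C,m)$; hence $\delta_p(L)=0$ for all $p\notin S$, so $\delta=0$, the restriction map $\Pic(\C_S)[m]\to\Pic(C)[m]$ is surjective, the hypotheses of Theorem~\ref{MainTheorem} hold, and that theorem with $l=1$ gives the claim.
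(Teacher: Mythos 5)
Your overall strategy matches the paper's: reduce to verifying the hypotheses of Theorem~\ref{MainTheorem} for the set $S=\Spec(\Z)\setminus U(C,m)$ by showing that every $m$-torsion class on $C$ lifts to an $m$-torsion class on the minimal regular model over $\Z_S$, and for this use $I_p(C)=1$ together with $\gcd(m,t_p(\Jac(C)))=1$ for $p\notin S$. The framing (passing through $\Pic(\C_S)[m]\to\Pic(C)[m]$, then Kummer theory over $\C_S$ and the valuative criterion as in Corollary~\ref{corollary1}) and the two preliminary reductions (finiteness of $S$; passing from $\Cl(\mathcal{O}_{\Q(P_t),S})$ to $\Cl(\Q(P_t))$ via the remark after Theorem~\ref{MainTheorem}) are all fine.

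Where you diverge from the paper is in \emph{how} the lifting is established, and this is also where your proposal has a genuine gap. The paper works with the relative Picard functor: it uses the fact that $I_p(C)=1$ for $p\in Z$ makes $\Pic^0_{\C/Z}$ representable by the identity component $\J^0$ of the N\'eron model (this is precisely \cite[\S{}9.5, Thm.~4(b)]{NeronModels}), so that the middle map $\rho_2$ in a three-column diagram becomes $\J^0(Z)[m]\hookrightarrow\J(Z)[m]\simeq\Jac(C)(\Q)[m]$, an equality exactly because $\gcd(m,t_p)=1$. Together with the injectivity of $\rho_3$ from \cite[Prop.~2.1]{Gro}, a short diagram chase gives the isomorphism $\Pic^0(\C/Z)[m]\xrightarrow{\ \sim\ }\Pic^0(C)[m]$. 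Your approach instead works directly with fibral divisors: the torsion-freeness of $\Psi$ via $I_p(C)=1$ and perfectness of $\mathbb{F}_p$ is correct, as is the reduction to $\delta=0$. But then the step you declare the ``crux'' --- that $\delta_p(L)=0$ is controlled by the image of $L$ in $\Phi_p(\mathbb{F}_p)$, so that coprimality of $m$ and $t_p$ kills it --- is \emph{granted}, not proved. This is not a small omission: it requires (a) identifying the obstruction $\delta_p$ in $\Psi_p/m$ with the specialisation class in $\Phi_p$ via Raynaud's intersection-matrix description, (b) showing this class lands in the $\mathbb{F}_p$-rational part $\Phi_p(\mathbb{F}_p)$ because $L$ is defined over $\Q$, and (c) a lattice computation to the effect that $\ker M=\Z\cdot(d_i)$ (again using $I_p(C)=1$) so that ``specialisation trivial'' does translate to ``$\delta_p(L)=0$''. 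You gesture at the right references, but the paper's route through the representability theorem packages (a)--(c) into a single citation and is materially cleaner. As it stands, your ``iff'' claim about $\delta_p$ is the theorem you are supposed to be proving, and the proposal does not actually prove it.
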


\begin{proof}
Let $Z=U(C,m)$, and let $\pi:\C\rightarrow Z$ be the minimal regular model of $C$ over $Z$. The curve $C$ being geometrically integral, we know (see \cite[\S{}8.3, Cor.~3.6 (c)]{Liu}) that $\pi_*\mathcal{O}_{\C}=\mathcal{O}_Z$ holds.

Let $\Pic_{\C/Z}$ be the relative Picard functor of $\C$ over $Z$, and let $\Pic_{\C/Z}^0$ be its identity component. Let $\J\rightarrow Z$ be the N\'eron model of $\Jac(C)$ over $Z$, and let $\J^0$ be the identity component of $\J$. By definition of $Z$, we know that $I_p(C)=1$ for all $p\in Z$. Hence, according to \cite[\S{}9.5, Thm.~4 (b)]{NeronModels}, the functor $\Pic_{\C/Z}^0$ is representable by $\J^0$.

It is clear that $\Pic(Z)=0$. Therefore, according to \cite[\S{}8.1, Prop.~4]{NeronModels}, we have an exact sequence
$$
0 \longrightarrow \Pic(\C) \longrightarrow \Pic_{\C/Z}(Z) \longrightarrow H^2(Z,\gm).
$$
Let $\Pic^0(\C/Z)$ be the subgroup of $\Pic(\C)$ consisting of elements whose image belongs to $\Pic_{\C/Z}^0(Z)$. By looking at $m$-torsion, we get an exact sequence
$$
0 \longrightarrow \Pic^0(\C/Z)[m] \longrightarrow \Pic_{\C/Z}^0(Z)[m] \longrightarrow H^2(Z,\gm).
$$
Of course, it is possible to consider a similar exact sequence where $\C$ is replaced by $C$. Putting the two sequences together, we get a diagram with exact lines
\begin{equation}
\label{cor2diag}
\begin{CD}
0 @>>> \Pic^0(\C/Z)[m] @>>> \Pic_{\C/Z}^0(Z)[m] @>>> H^2(Z,\gm) \\
@. @VV\rho_1V @VV\rho_2V @VV\rho_3V \\
0 @>>> \Pic^0(C)[m] @>>> \Jac(C)(\Q)[m] @>>> H^2(\Spec(\Q),\gm) \\
\end{CD}
\end{equation}
where the vertical maps $\rho_i$ are obtained by restriction to the generic fiber. We claim that $\rho_2$ is an isomorphism. Indeed, $\Pic^0_{\C/Z}$ being isomorphic to $\J^0$, this map can be identified with
$$
\J^0(Z)[m]\subseteq \J(Z)[m]\simeq \Jac(C)(\Q)[m],
$$
and the inclusion is in fact an equality because $t_p(\Jac(C))$ is coprime to $m$ for all $p\in Z$. Moreover, the map $\rho_3$ is injective according to \cite[Prop. 2.1]{Gro}. It follows from the snake lemma applied to the diagram \eqref{cor2diag} that $\rho_1$ is an isomorphism.

By the same arguments as in the proof of Corollary~\ref{corollary1}, it follows that $C$, $m$ and $S$ satisfy the hypotheses of Theorem~\ref{MainTheorem}.
\end{proof}

\begin{rmq}
If $C$ has genus at least $2$, then, according to Proposition-Definition~\ref{better_set}, the set $S$ of Corollary~\ref{corollary2} is contained in the set of primes of bad reduction of $\Jac(C)$. This set is in general strictly smaller than the set of primes of bad reduction of $C$, even in the case when $C$ has a rational point. This is the main interest of Corollary~\ref{corollary2} compared to Corollary~\ref{corollary1}.
\end{rmq}

\begin{rmq}
If $C$ has genus one, then $\Jac(C)$ is an elliptic curve, that we denote by $E$. Moreover, we have
$$
\Pic^0(C)\subseteq E(\Q)\simeq \Pic^0(E),
$$
where the isomorphism follows from auto-duality of elliptic curves. Hence, if we are interested in building number fields with a large class group, we may apply Corollary~\ref{corollary2} to $E$ instead of $C$, which permits a possibly smaller $S$ (see also Corollary \ref{super}).
\end{rmq}

\begin{rmq}
By definition of $\ind(C)$, it is possible to find number fields $K_1,\ldots, K_n$ together with points $P_i\in C(K_i)$ such that the gcd of the degrees $[K_i:\Q]$ is $\ind(C)$. Hence, according to \cite[\S{}9.1, Prop.~11]{NeronModels}, the cokernel of the map $\Pic^0(C) \rightarrow \Jac(C)(\Q)$ is killed by $\ind(C)$. Therefore, if we let
$$
m'=\frac{m}{\gcd(m,\ind(C))},
$$
then we have
$$
\rk_{m'} \Pic^0(C)_{\tors} \geq \rk_m \Jac(C)(\Q)_{\tors}
$$
and equality holds when $\gcd(m,\ind(C))=1$.
\end{rmq}


\section{Applications}

\subsection{Curves}
In this section we consider some applications and further refinements of Corollary~\ref{corollary2}.  Note that if we fix the degree of $\phi$, the quantity $\rk \mathcal{O}_{\Q(P_t),S}^{\times}-\# S$ that appears in Corollary~\ref{corollary2} is minimized if every prime in $S$ is totally ramified in $\Q(P_t)$ and $\Q(P_t)$ has the maximum possible number of complex places for a number field of degree $\phi$.  In this case, we have the equalities
\begin{equation*}
\rk \mathcal{O}_{\Q(P_t),S}^{\times}-\# S=\rk \mathcal{O}_{\Q(P_t)}^{\times}=\left[\frac{\deg \phi-1}{2}\right].
\end{equation*}
By choosing appropriate maps $\phi$ in Corollary~\ref{corollary2}, we show that this advantageous situation can be achieved for hyperelliptic or, more generally, superelliptic curves.

\begin{cor}
\label{super}
Let $C$ be (a smooth projective model of) the plane curve defined by 
\begin{equation*}
y^n=f(x), \quad f(x)\in \Q[x], \quad n>1, 
\end{equation*}
with $(\deg f, n)=1$.  Let $m>1$ be an integer.  Then there exist $\gg X^{\frac{1}{(n-1)\deg f}}/\log X$ number fields $k$ of degree $[k:\Q]=n$ with discriminant $d_k$, $|d_k|<X$, and
\begin{equation*}
\rk_m \Cl(k)\geq \rk_m \Jac(C)(\Q)_{\rm{tors}}-\left[\frac{n-1}{2}\right].
\end{equation*}
\end{cor}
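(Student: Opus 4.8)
The plan is to apply Corollary~\ref{corollary2} to the superelliptic curve $C$, with the map $\phi$ being the degree-$n$ projection $(x,y)\mapsto x$, and then to control the arithmetic data ($\#S$, $\rk\mathcal{O}_{k,S}^\times$, the ramification of primes in $S$, and the discriminant) carefully enough to convert the conclusion of Corollary~\ref{corollary2} into the stated inequality and count. First I would record that since $(\deg f, n)=1$, the smooth projective model $C$ has a unique point at infinity, which is rational, so $\ind(C)=1$; hence by Proposition-Definition~\ref{better_set}(ii), $S=\Spec(\Z)\setminus U(C,m)$ consists only of primes $p$ for which $t_p(\Jac(C))$ is divisible by some prime dividing $m$, and in particular $S$ is contained in the set of primes of bad reduction of $\Jac(C)$ (together, if $g=1$, with primes dividing $m$ — one should check the genus-one case separately or simply absorb it). The map $\phi\colon C\to\mathbb{P}^1$, $(x,y)\mapsto x$, has degree $n$, and for $t\in\Z$ the fiber $P_t=\phi^{-1}(t)$ generates the field $\Q(P_t)=\Q(\sqrt[n]{f(t)})$ when $f(t)$ is, say, $n$-th-power-free away from a controlled set; Corollary~\ref{corollary2} tells us that for all but $O(\sqrt N)$ values $t\le N$ we have $[\Q(P_t):\Q]=n$ and
\[
\rk_m\Cl(\Q(P_t))\ge \rk_m\Pic(C)_{\tors}+\#S-\rk\mathcal{O}_{\Q(P_t),S}^\times.
\]
Since $\Pic(C)_{\tors}\supseteq \Pic^0(C)_{\tors}=\Jac(C)(\Q)_{\tors}$ (equality of torsion up to the index, which is $1$ here), the first term is $\ge \rk_m\Jac(C)(\Q)_{\tors}$.

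The crux is then to arrange that $\#S-\rk\mathcal{O}_{\Q(P_t),S}^\times \ge -[\tfrac{n-1}{2}]$, i.e. that $\rk\mathcal{O}_{k,S}^\times-\#S\le [\tfrac{n-1}{2}]$ for $k=\Q(P_t)=\Q(\sqrt[n]{f(t)})$. As the discussion preceding the corollary notes, $\rk\mathcal{O}_{k,S}^\times-\#S\le\rk\mathcal{O}_k^\times$ always, with equality exactly when every prime of $S$ is totally ramified in $k$; and $\rk\mathcal{O}_k^\times=r_1+r_2-1$. So it suffices to choose the values $t$ so that (a) $k=\Q(\sqrt[n]{f(t)})$ has as few real embeddings as possible — ideally $r_1+r_2-1=[\tfrac{n-1}{2}]$, which holds when $k$ has exactly one real place if $n$ is odd, or is totally complex if $n$ is even — and (b) every prime $p\in S$ is totally ramified in $k$, which for the pure extension $\Q(\sqrt[n]{a})$ happens when $v_p(a)$ is coprime to $n$. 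Both are conditions on $f(t)\bmod p$ (and on the sign/archimedean size of $f(t)$), so I would fix, for each $p\in S$, a residue class of $t$ modulo a suitable power of $p$ forcing $v_p(f(t))$ to be exactly $1$ (possible since $f$ is nonconstant; one may need to also control the prime $p$ itself if $p\mid n$, handling wild ramification), and fix the range of $t$ so that $f(t)$ has the desired sign pattern making the archimedean places come out right. These are finitely many congruence and sign conditions, cutting $t$ down to a positive-density arithmetic progression, which is compatible with the $O(\sqrt N)$ exceptional set from Corollary~\ref{corollary2}.

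For the counting statement I would use Theorem~\ref{Zan} (Dvornicich--Zannier): among the $t\le N$ surviving the congruence/sign restrictions and the Hilbert-irreducibility exceptional set, the number of distinct fields $\Q(P_t)$ is $\gg N/\log N$. It remains to translate the parameter $N$ (bound on $|t|$) into the bound $X$ on $|d_k|$: for $k=\Q(\sqrt[n]{f(t)})$ one has $|d_k|\ll |f(t)|^{n-1}\ll N^{(n-1)\deg f}$ (the discriminant of a pure extension $\Q(\sqrt[n]{a})$ divides $n^n a^{n-1}$ up to bounded factors, and $|f(t)|\ll N^{\deg f}$), so $|d_k|<X$ is guaranteed once $N\asymp X^{1/((n-1)\deg f)}$; feeding this into the $\gg N/\log N$ count yields $\gg X^{1/((n-1)\deg f)}/\log X$ fields. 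The main obstacle I anticipate is (b): ensuring \emph{every} prime of $S$ is totally ramified while simultaneously keeping the fields distinct and the exceptional set under control — in particular, checking that the congruence conditions at the bad primes (especially any $p\mid n$, where ramification in $\Q(\sqrt[n]{a})$ is subtler than just reading $v_p(a)$) do not conflict with each other or with the archimedean conditions, and verifying that one can indeed always achieve $v_p(f(t))=1$ rather than a higher multiple of $p$'s contribution. A clean way around the $p\mid n$ difficulty is to observe that $S$ can be taken disjoint from the primes dividing $n$ by refining the choice of integral model / noting $t_p$ behaviour, or else to simply note that finitely many extra totally-ramified conditions only shrink the progression, not its density.
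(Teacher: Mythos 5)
Your overall strategy — apply Corollary~\ref{corollary2}, arrange that every prime of $S$ is totally ramified in $k=\Q(P_t)$ so that $\rk\mathcal{O}_{k,S}^\times-\#S=\rk\mathcal{O}_k^\times$, arrange the sign of $f$ so that $k$ has the maximum number of complex places, then count via Theorem~\ref{Zan} and translate the height bound on $t$ into a discriminant bound — is exactly the shape of the paper's proof. (Minor slip: you wrote $\rk\mathcal{O}_{k,S}^\times-\#S\le\rk\mathcal{O}_k^\times$; the inequality goes the other way, and total ramification is what forces equality in the direction you actually need.)

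However, there is a genuine gap in the step you yourself flag as ``the main obstacle.'' You take $\phi=x$ and propose to force total ramification of each $p\in S$ in $\Q(\sqrt[n]{f(t)})$ by choosing a residue class of $t$ modulo a power of $p$ so that $v_p(f(t))$ is coprime to $n$ (ideally $=1$). This is not always achievable. The set of values $\{v_p(f(t)) : t\in\Z_p\}$ is governed by the $p$-adic factorization of $f$, and nothing prevents it from landing entirely in residues divisible by $\gcd$ with $n$. A simple illustration with $n=2$: for $f(x)=x^3+p^2$, one has $v_p(f(t))\in\{0,2\}$ for every $t\in\Z_p$, so $v_p(f(t))$ is never odd and $p$ never ramifies in $\Q(\sqrt{f(t)})$. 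So the congruence-selection plan, as stated, fails in general, and this is precisely where your proof does not go through.

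The paper sidesteps this entirely by a different choice of $\phi$. After rescaling so that $f\in\Z[x]$ with leading coefficient $-1$, it sets $M=\prod_{p\in S}p$ and $\phi=x-\tfrac{1}{M}$. Then $\Q(P_t)=\Q\bigl(\sqrt[n]{f\bigl(\tfrac{tM+1}{M}\bigr)}\bigr)$, and for every $p\in S$ and every integer $t$ the argument of $f$ has $v_p=-1$, so the leading term dominates and $v_p\bigl(f\bigl(\tfrac{tM+1}{M}\bigr)\bigr)=-\deg f$, which is coprime to $n$ by hypothesis; hence total ramification at every $p\in S$ is automatic (including at $p\mid n$, with no separate wild-ramification discussion needed — coprimality of the valuation to $n$ already forces ramification index $n$). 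The same normalization $a_{\deg f}=-1$ makes $f\bigl(\tfrac{tM+1}{M}\bigr)<0$ for $t\gg 0$, giving the archimedean condition with no extra case analysis and no shrinking to a sub-progression, so Theorem~\ref{Zan} applies directly to $t=1,\dots,N$. In short: replacing your $\phi=x$ by $\phi=x-\tfrac{1}{M}$ converts your two unresolved ``congruence and sign conditions'' into identities, and this is the missing idea.
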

\begin{proof}
Let $r=\deg f$ and let $f(x)=\sum_{i=0}^r a_ix^i$.  By rescaling $x$ and $y$ we can assume that $a_i\in \Z$ for all $i$ and, using $(r,n)=1$, that $a_r=-1$.  Let $S$ be the set of primes from Corollary \ref{corollary2}. Let $M=\prod_{p\in S}p$.  Let $\phi$ be the rational function on $C$ defined by $\phi=x-\frac{1}{M}$.  For  $t\in \Z$, let $P_t=\phi^{-1}(t)$.  Then $\Q(P_t)\cong \Q\left(\sqrt[n]{f\left(\frac{tM+1}{M}\right)}\right)$.  Let $p\in S$.  Since $a_r=-1$, we have that $v_p \left(f\left(\frac{tM+1}{M}\right)\right)=-r$.  Since $(r,n)=1$, this implies that $p$ totally ramifies in $\Q(P_t)$.  Note also that $[\Q(P_t):\Q]=n$.  So every prime in $S$ is totally ramified in $\Q(P_t)$.  The condition $(r,n)=1$ also implies that $y^n=f(x)$ has a rational point at infinity.  Then Corollary~\ref{corollary2} applied to $\phi$ and $C$ implies that for all but $O(\sqrt{N})$ values $t=1,\ldots, N$, we have  $[\Q(P_t):\Q]=n$ and
\begin{equation*}
\rk_m \Cl(\Q(P_t))\geq \rk_m \Jac(C)(\Q)_{\rm{tors}}-\rk \mathcal{O}_{\Q(P_t)}^*.
\end{equation*}
Since $a_r=-1$, for $t\gg 0$, $f\left(\frac{tM+1}{M}\right)$ is negative.  It follows that for $t\gg 0$, $\Q(P_t)$ has exactly one real place if $n$ is odd and no real places if $n$ is even.  So by Dirichlet's theorem, $\rk \mathcal{O}_{\Q(P_t)}^*=\left[\frac{n-1}{2}\right]$ for $t\gg 0$.  It follows from Theorem~\ref{Zan} that there are $\gg N/\log N$ distinct number fields in the set $\{\Q(P_1),\ldots,\Q(P_N)\}$.  An easy calculation shows that $|d_{\Q(P_t)}|=O(t^{(n-1)t})$.  Combining the above statements then gives the corollary.
\end{proof}

Of particular interest is the case where $C$ is a hyperelliptic curve.

\begin{cor}
\label{chyp}
Let $C$ be a smooth projective hyperelliptic curve over $\Q$ with a rational Weierstrass point.  Let $g$ denote the genus of $C$.  Let $m>1$ be an integer.  Then there exist $\gg X^{\frac{1}{2g+1}}/\log X$ imaginary quadratic number fields $k$ with
\begin{equation*}
\rk_m \Cl(k)\geq \rk_m \Jac(C)(\Q)_{\rm{tors}},\quad |d_k|<X,
\end{equation*}
and $\gg X^{\frac{1}{2g+1}}/\log X$ real quadratic number fields $k$ with
\begin{equation*}
\rk_m \Cl(k)\geq \rk_m \Jac(C)(\Q)_{\rm{tors}}-1,\quad d_k<X.
\end{equation*}
\end{cor}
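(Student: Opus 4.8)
The plan is to specialise Corollary~\ref{super} to the case $n=2$, and then to rerun its proof with a sign change in order to produce the real quadratic fields as well.

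First I would put the curve in the right shape. Since $C$ is hyperelliptic of genus $g$ with a rational Weierstrass point, a fractional linear change of coordinates over $\Q$ moves that point to infinity, exhibiting $C$ as (a smooth projective model of) a plane curve $y^2=f(x)$ with $f\in\Q[x]$ squarefree of degree $2g+1$; in particular $(\deg f,2)=1$, so Corollary~\ref{super} applies with $n=2$ and $\deg f=2g+1$. Its conclusion produces $\gg X^{1/((n-1)\deg f)}/\log X = X^{1/(2g+1)}/\log X$ degree-$2$ fields $k$ with $|d_k|<X$ and
$$
\rk_m \Cl(k) \geq \rk_m \Jac(C)(\Q)_{\tors} - \left[\tfrac{n-1}{2}\right] = \rk_m \Jac(C)(\Q)_{\tors},
$$
since $[1/2]=0$. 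Moreover, the proof of Corollary~\ref{super} chooses the specialisation points $P_t$ so that $f\bigl(\tfrac{tM+1}{M}\bigr)<0$ for $t\gg 0$; with $n=2$ this means each such $k=\Q(P_t)$ is imaginary quadratic. This already gives the first assertion.

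For the real quadratic fields, I would keep the same rational function $\phi=x-\tfrac1M$ from the proof of Corollary~\ref{super}, but let $t$ run through the negative integers $-1,-2,\dots,-N$. After the normalisation $a_{2g+1}=-1$ used there, $f$ has negative leading coefficient and odd degree, so $f\bigl(\tfrac{tM+1}{M}\bigr)>0$ for $t\ll 0$; hence $\Q(P_t)=\Q\bigl(\sqrt{f(\tfrac{tM+1}{M})}\bigr)$ is now a real quadratic field. The ramification argument is unchanged: for $p\in S$ one has $v_p\bigl(\tfrac{tM+1}{M}\bigr)=-1$ regardless of the sign of $t$, so $v_p\bigl(f(\tfrac{tM+1}{M})\bigr)=-(2g+1)$, and since $(2g+1,2)=1$ the prime $p$ is totally ramified in $\Q(P_t)$. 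Now $\rk\mathcal{O}_{\Q(P_t)}^{\times}=1$, so Corollary~\ref{corollary2} gives $\rk_m\Cl(\Q(P_t))\geq \rk_m\Jac(C)(\Q)_{\tors}-1$ for all but $O(\sqrt N)$ of these $t$. The counting is exactly as in Corollary~\ref{super}: $|d_{\Q(P_t)}|=O(|t|^{2g+1})$, and Theorem~\ref{Zan} applied to $t=-1,\dots,-N$ yields $\gg N/\log N$ distinct fields, so taking $N\asymp X^{1/(2g+1)}$ produces $\gg X^{1/(2g+1)}/\log X$ real quadratic fields with $0<d_k<X$.

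The only step that is not a purely formal specialisation is getting $C$ into the plane form required by Corollary~\ref{super}: one must check that a rational Weierstrass point lets one write $C$ as a smooth projective model of $y^2=f(x)$ with $\deg f=2g+1$ odd, which is where the genus-to-degree relation and $(2g+1,2)=1$ are used. Once that is in place, the imaginary case is a direct quotation of Corollary~\ref{super}, and the real case is the same proof with $t$ sent to $-\infty$; the real-versus-imaginary dichotomy is governed solely by the sign of the leading coefficient of $f$ (after the normalisation internal to Corollary~\ref{super}) together with the direction in which $t$ tends to infinity. I do not anticipate any genuine obstacle.
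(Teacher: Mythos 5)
Your proposal is correct and follows essentially the same route as the paper: write $C$ as $y^2=f(x)$ with $\deg f=2g+1$ using the rational Weierstrass point, invoke the proof of Corollary~\ref{super} with $n=2$ (noting the sign of $f(\tfrac{tM+1}{M})$ for $t\gg 0$ to get imaginary quadratic fields), and rerun the same argument with $t<0$ to flip the sign and produce real quadratic fields, with the extra $-1$ coming from the rank of the unit group. The paper's proof is exactly this, stated more tersely.
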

\begin{proof}
Since $C$ has a rational Weierstrass point, $C$ is birational to a plane curve defined by $y^2=f(x)$, for some $f(x)\in \Q[x]$ with $\deg f=2g+1$.  The statement on imaginary quadratic fields now follows from the proof of Corollary~\ref{super}.  The statement for real quadratic fields follows similarly from the proof of Corollary~\ref{super} using points $P_t$, $t<0$, with the extra $-1$ term coming from the rank one unit group of a real quadratic field.
\end{proof}

A classical result of Nagell \cite{Nag, Nag2} states that for any positive integer $m$ there exist infinitely many imaginary quadratic number fields with an element of order $m$ in the ideal class group.  Weinberger \cite{Wei} and Yamamoto \cite{Yam}, independently, extended Nagell's result to real quadratic fields and Yamamoto \cite{Yam} improved Nagell's result for imaginary quadratic fields to class groups of $m$-rank two.  As a sample application of Corollary~\ref{chyp}, we show that it easily implies (largely known) quantitative versions of the results of Weinberger \cite{Wei} and Yamamoto \cite{Yam}.

\begin{lem}
\label{hlem}
Let $b,c\in \Q$, with $c\neq 0$, $b^2\neq 4c^2$, and let $m>1$ be an integer.   Let $C$ be the (smooth projective model of the) hyperelliptic curve $y^2=x^{2m}+bx^m+c^2$.  Then $\rk_m \Jac(C)(\Q)_{\rm tors}\geq 2$.
\end{lem}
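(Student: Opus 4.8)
The plan is to exhibit $(\Z/m)^{2}$ explicitly inside $\Jac(C)(\Q)[m]$ by exploiting the hidden $\mu_m$-symmetry of $C$. Fix a primitive $m$-th root of unity $\zeta$ and let $\sigma\colon(x,y)\mapsto(\zeta x,y)$; since $\zeta^{m}=1$ this is an automorphism of $C$ of order $m$. The quotient $C_{0}:=C/\langle\sigma\rangle$ has function field $\overline{\Q}(u,y)$ with $u=x^{m}$ and $y^{2}=u^{2}+bu+c^{2}$, so $C_{0}$ is a smooth conic and $f$ is separable (both because $c\neq 0$ and $b^{2}\neq 4c^{2}$); as $C_{0}$ carries the rational point $(u,y)=(0,c)$ we have $C_{0}\cong\mathbb{P}^{1}$ over $\Q$. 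The quotient map $\phi\colon C\to C_{0}$ has degree $m$, and the points of $C$ fixed by $\sigma$ — which are exactly $Q_{1}=(0,c)$, $Q_{2}=(0,-c)$, $Q_{3}=\infty_{+}$, $Q_{4}=\infty_{-}$ — are its ramification points, each totally ramified; write $P_{i}=\phi(Q_{i})$, so that $P_{1},\dots,P_{4}$ are four distinct rational points of $C_{0}$ and $\phi^{*}[P_{i}]=mQ_{i}$. Note that $Q_{1},\dots,Q_{4}$ are $\Q$-rational (the two points at infinity because $f$ is monic).

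I would then record two elementary facts. First, $\operatorname{div}_{C}(x)=Q_{1}+Q_{2}-Q_{3}-Q_{4}$, since $x$ vanishes only at $(0,\pm c)$ and has a simple pole at each of $\infty_{\pm}$. Second, for $i\neq j$ the degree-zero divisor $[P_{i}]-[P_{j}]$ is principal on $C_{0}\cong\mathbb{P}^{1}$, say $[P_{i}]-[P_{j}]=\operatorname{div}_{C_{0}}(h_{ij})$, whence $m(Q_{i}-Q_{j})=\phi^{*}\operatorname{div}_{C_{0}}(h_{ij})=\operatorname{div}_{C}(h_{ij}\circ\phi)$ is principal; as the $Q_{i}$ are rational, $[Q_{i}-Q_{j}]\in\Jac(C)(\Q)[m]$. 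Set $e_{1}=[Q_{1}-Q_{3}]$ and $e_{2}=[Q_{1}-Q_{4}]$. It now suffices to prove that the homomorphism $(\Z/m)^{2}\to\Jac(C)(\Q)$ sending $(a,b)$ to $ae_{1}+be_{2}$ is injective.

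For the injectivity, suppose $ae_{1}+be_{2}=0$ with $0\le a,b<m$, so that $(a+b)Q_{1}-aQ_{3}-bQ_{4}=\operatorname{div}(w)$ for some $w\in\overline{\Q}(C)^{\times}$. Since $\sigma$ fixes each $Q_{i}$ it fixes $\operatorname{div}(w)$, hence $\sigma^{*}w=\lambda w$ with $\lambda$ a root of unity (as $\sigma^{*}w/w$ has trivial divisor and $\sigma$ has order $m$). Using the eigenspace decomposition $\overline{\Q}(C)=\bigoplus_{j=0}^{m-1}\overline{\Q}(C_{0})\,x^{j}$ — valid since $\overline{\Q}(C)=\overline{\Q}(C_{0})(x)$ is of degree $m$ over $\overline{\Q}(C_{0})$ with $x^{m}=u\in\overline{\Q}(C_{0})$, and $\sigma$ acts on $\overline{\Q}(C_{0})x^{j}$ by $\zeta^{j}$ — we get $w=h\,x^{j}$ with $h\in\overline{\Q}(C_{0})^{\times}$ and $0\le j<m$, so
\[
\operatorname{div}(w)=\phi^{*}\operatorname{div}_{C_{0}}(h)+j\,\operatorname{div}_{C}(x).
\]
Because $\phi$ is unramified, with fibres disjoint from $\{Q_{1},\dots,Q_{4}\}$, over every point of $C_{0}$ outside $\{P_{1},\dots,P_{4}\}$, while $\operatorname{div}(w)$ and $j\,\operatorname{div}_{C}(x)$ are supported on $\{Q_{1},\dots,Q_{4}\}$, the divisor $\operatorname{div}_{C_{0}}(h)$ must be supported on $\{P_{1},\dots,P_{4}\}$; writing $\operatorname{div}_{C_{0}}(h)=\sum_{i=1}^{4}k_{i}[P_{i}]$ gives $\phi^{*}\operatorname{div}_{C_{0}}(h)=\sum_{i=1}^{4}mk_{i}Q_{i}$. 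Matching the coefficients of $Q_{1},\dots,Q_{4}$ in
\[
\sum_{i=1}^{4}mk_{i}Q_{i}+j(Q_{1}+Q_{2}-Q_{3}-Q_{4})=(a+b)Q_{1}-aQ_{3}-bQ_{4}
\]
yields first $mk_{2}+j=0$, so $j=0$, and then $mk_{3}=-a$, $mk_{4}=-b$, forcing $a=b=0$. Thus $(\Z/m)^{2}\hookrightarrow\Jac(C)(\Q)[m]$, and $\rk_m\Jac(C)(\Q)_{\rm tors}\ge 2$.

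The main obstacle is this last injectivity argument: one must control exactly which functions on $C$ can have divisor $(a+b)Q_{1}-aQ_{3}-bQ_{4}$, and the device that resolves it is using the $\mu_m$-action to force any such function to have the shape $h(u,y)\,x^{j}$, after which the divisor bookkeeping leaves no freedom. Everything else — identifying $C_{0}$ with $\mathbb{P}^{1}$, locating the branch points of $\phi$, and checking that the classes $[Q_{i}-Q_{j}]$ are rational and $m$-torsion — is routine.
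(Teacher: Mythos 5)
Your proof is correct, but it takes a genuinely different route from the paper's. The paper works directly with the explicit functions $y-x^m-c$ and $y-x^m+c$, whose divisors are $m(P-\infty)$ and $m(\overline{P}-\infty)$; it then shows that $i(P-\infty)+j(\overline{P}-\infty)$ is never principal for $0\le j\le i\le m-1$, $i>0$, by reducing to the linear equivalence $(i-j)P+j\overline{\infty}\sim i\infty$ and invoking Riemann--Roch (namely $l(i\infty)=1$, since $g(C)=m-1$ and $\infty$ is not a Weierstrass point). You instead exploit the $\mu_m$-action $\sigma\colon(x,y)\mapsto(\zeta x,y)$, identify $C/\langle\sigma\rangle\cong\mathbb{P}^1$, and kill all relations among the classes $[Q_i-Q_j]$ by observing that any $w$ with $\sigma$-invariant divisor must be a $\sigma^*$-eigenvector, hence of the form $h(u,y)\,x^j$, after which the ramification bookkeeping forces $a=b=0$. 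The two proofs use the same four $\Q$-rational points and generate the same subgroup of $\Jac(C)(\Q)$ (since $Q_1+Q_2\sim Q_3+Q_4$, your $\{[Q_1-Q_3],[Q_1-Q_4]\}$ and the paper's $\{[P-\infty],[\overline{P}-\infty]\}$ span the same thing), but the linear-independence step is entirely different: the paper's is shorter and uses Riemann--Roch and the genus of $C$; yours avoids Riemann--Roch and the genus of $C$ altogether, at the cost of setting up the Kummer covering $C\to\mathbb{P}^1$ and the eigenspace decomposition of the function field. Your method has the advantage of generalising more transparently to superelliptic covers, where the $\mu_n$-symmetry is the natural structure to exploit.
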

\begin{proof}
The curve $C$ has two $\Q$-rational points at infinity that we'll denote by $\infty$ and $\overline{\infty}$.  Let $P,\bar{P}\in C$ be the two points with coordinates $P=(0,c)$ and $\overline{P}=(0,-c)$.  Then, after possibly switching $\infty$ and $\overline{\infty}$, we have
\begin{align*}
{\rm div}(y-x^m-c)&=m(P-\infty),\\
{\rm div}(y-x^m+c)&=m(\overline{P}-\infty).
\end{align*}
Let $i,j\in \{0,\ldots, m-1\}$ with $i\geq j$, $i>0$.  Then, since $P+\overline{P}\sim \infty+\overline{\infty}$,
\begin{align*}
i(P-\infty)+j(\overline{P}-\infty)&\sim (i-j)P+j(P+\overline{P})-(i+j)\infty\\
&\sim (i-j)P+j\overline{\infty}-i\infty.
\end{align*}
Since the genus of $C$ is $m-1$ and $\infty$ is not a Weierstrass point of $C$, we have $l(i\infty)=1$ as $i\leq m-1$.  Then $i(P-\infty)+j(\overline{P}-\infty)$ is not a principal divisor and it follows that the divisors classes of $P-\infty$ and $\overline{P}-\infty$ generate a subgroup $(\Z/m)^2$ in $\Jac(C)(\Q)$.
\end{proof}

Note that there are hyperelliptic curves as in Lemma~\ref{hlem} with a rational Weierstrass point (e.g., curves with $b=-1-c^2$). Then from Corollary~\ref{chyp} we immediately obtain the following result.
\begin{cor}
\label{ri}
Let $m>1$ be an integer.  There exist $\gg X^{\frac{1}{2m-1}}/\log X$ imaginary quadratic number fields $k$ with $|d_k|<X$ and $\rk_m \Cl(k)\geq 2$ and $\gg X^{\frac{1}{2m-1}}/\log X$ real quadratic number fields $k$ with $d_k<X$ and $\rk_m \Cl(k)\geq 1$.
\end{cor}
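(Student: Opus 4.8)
The plan is simply to feed the curve from Lemma~\ref{hlem} into Corollary~\ref{chyp}; almost all of the work has already been done. First I would fix a rational number $c\notin\{0,1,-1\}$ (for instance $c=2$) and set $b=-1-c^2$, so that the curve under consideration is $C:y^2=x^{2m}+bx^m+c^2$. Then $b^2-4c^2=(1+c^2)^2-4c^2=(c^2-1)^2\neq 0$, so the non-degeneracy hypotheses $c\neq 0$ and $b^2\neq 4c^2$ of Lemma~\ref{hlem} are satisfied; consequently $C$ is a smooth projective hyperelliptic curve of genus $g=m-1$ with $\rk_m\Jac(C)(\Q)_{\tors}\geq 2$.

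Next I would verify that this $C$ carries a $\Q$-rational Weierstrass point. With $b=-1-c^2$ one has the factorisation $x^{2m}+bx^m+c^2=(x^m-1)(x^m-c^2)$, so $x=1$ is a root of the right-hand side and $(1,0)$ is a rational branch point, i.e.\ a rational Weierstrass point of $C$. (The choice $b=-1-c^2$ is precisely what forces $1$ to be a branch point while keeping $b^2\neq 4c^2$, which is why it is singled out in the remark preceding the statement.)

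Finally I would apply Corollary~\ref{chyp} to $C$. Since $2g+1=2m-1$, it produces $\gg X^{\frac{1}{2m-1}}/\log X$ imaginary quadratic fields $k$ with $|d_k|<X$ and $\rk_m\Cl(k)\geq \rk_m\Jac(C)(\Q)_{\tors}\geq 2$, together with $\gg X^{\frac{1}{2m-1}}/\log X$ real quadratic fields $k$ with $d_k<X$ and $\rk_m\Cl(k)\geq \rk_m\Jac(C)(\Q)_{\tors}-1\geq 1$, which is exactly the assertion. I do not expect any genuine obstacle here: the real content lies in the construction of the subgroup $(\Z/m)^2$ inside $\Jac(C)(\Q)$ in Lemma~\ref{hlem} and in the quantitative mechanism of Corollary~\ref{chyp}. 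The only point needing a moment's care is that the Weierstrass-point hypothesis of Corollary~\ref{chyp} be compatible with the hypotheses of Lemma~\ref{hlem}, and the displayed factorisation shows that $b=-1-c^2$ with $c\notin\{0,\pm 1\}$ achieves this.
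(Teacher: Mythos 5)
Your argument is correct and is exactly the paper's proof: the paper likewise remarks that $b=-1-c^2$ (with $c\neq 0,\pm1$) yields a curve satisfying the hypotheses of Lemma~\ref{hlem} with a rational Weierstrass point (via the factorisation $(x^m-1)(x^m-c^2)$), and then deduces Corollary~\ref{ri} directly from Corollary~\ref{chyp} with $g=m-1$. Your verification of $b^2\neq 4c^2$ and of the Weierstrass point at $(1,0)$ correctly fills in the details the paper leaves implicit.
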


If $m$ is odd, then Byeon \cite{Bye} and Yu \cite{Yu} have proved, for imaginary and real quadratic fields, respectively, the better lower bound of $\gg X^{1/m-\epsilon}$.  If $m$ is even, in the real quadratic case a lower bound of $\gg X^{1/m}$ was proved by Chakraborty, Luca, and
Mukhopadhyay \cite{CLM}.  The imaginary quadratic case of Corollary~\ref{ri} with $m$ even appears to be new.

Using the results presented here, it is in fact possible to derive quantitative versions of many of the known results on constructing number fields with class groups of large rank.  This aspect of the results will be pursued in a future paper.

Corollary~\ref{chyp} raises the following natural question.

\begin{question}
\label{Q1}
Let $p$ be an odd prime.  Do there exist hyperelliptic curves $C$ over $\Q$ with $\rk_p \Jac(C)(\Q)_{\rm{tors}}$ arbitrarily large?
\end{question}

More generally, one can ask whether there exist curves $C$ over $\Q$ of gonality $n$ with $\rk_p \Jac(C)(\Q)_{\rm{tors}}$ arbitrarily large, where $p$ is a prime not dividing $n$ (in the case $p|n$, a positive answer is easily obtained from curves of the form $y^n=f(x)$, where $f(x)$ splits over $\Q$).  This question does not appear to have been extensively investigated.  Previous papers studying the problem of constructing Jacobians of curves with large rational torsion subgroups have primarily focused on either curves of low genus (e.g, \cite{Poo}), or on producing a rational torsion point of large order in the Jacobian of a curve of genus $g$, for every genus $g$ (e.g., \cite{Fly}).

\subsection{Torsion line bundles on arithmetic varieties}

We consider here the question formulated in \cite{bisipappas}, namely:

\begin{question}
\label{APquest}
Let $\mathcal{L}$ be a nontrivial line bundle over an arithmetic variety $\X$. Is it possible to find a section $P$ of $\X$ over some number field such that the pull-back of $\mathcal{L}$ by this section is also nontrivial ?
\end{question}

Here ``arithmetic variety'' means a normal scheme whose structural morphism to $\Spec(\Z)$ is proper and flat. Let us note that, under these assumptions, the generic fiber of $\X$ need not be geometrically integral.

\begin{cor}
\label{arithmetic_bundles}
Let $\X$ be an arithmetic variety whose generic fiber $X$ is smooth and geometrically integral. Let $S$ be the set of places of bad reduction of $\X$, and let $\X_S:=\X\times_{\Z}\Spec(\Z_S)$. Then, given an integer $m>1$, there exists an infinity of number fields $K$ with a point $P\in X(K)$ such that the image of the restriction map
$$
P^*:\Pic(\X_S)[m]\longrightarrow \Cl(\mathcal{O}_{K,S})[m]
$$
satisfies
$$
\rk_m \img(P^*)\geq \rk_m \Pic(\X_S)[m] +\#S - \rk \mathcal{O}_{K,S}^{\times}.
$$
If $X$ is the normalization of a plane curve defined by
$$
y^n=f(x)
$$
with $(\deg(f),n)=1$, then the same holds with
$$
\rk_m \img(P^*)\geq  \rk_m \Pic(\X_S)[m]- \left[\frac{n-1}{2}\right].
$$
\end{cor}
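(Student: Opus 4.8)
The plan is to derive both statements from Theorem~\ref{MainTheorem} applied to the variety $V=X$, refined so as to control the image of $P^{*}$ rather than merely $\rk_m\Cl(\mathcal{O}_{K,S})$. First I record the geometric input. Since $S$ contains every prime of bad reduction of $\X$, the scheme $\X_S=\X\times_{\Z}\Spec(\Z_S)$ is smooth and proper over $\Spec(\Z_S)$, and being smooth over the regular base $\Spec(\Z_S)$ it is itself regular. As $X$ is geometrically integral, the proof of Lemma~\ref{good_model} applies essentially verbatim: Zariski's connectedness theorem shows that $\X_S\to\Spec(\Z_S)$ has integral fibres, so every fibral divisor on $\X_S$ is a sum of fibres and hence principal (because $\Z_S$ is a principal ideal domain), while regularity lets one extend divisors from $X$ to $\X_S$. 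Consequently the restriction map $\Pic(\X_S)\to\Pic(X)$ is an isomorphism, and by the valuative criterion of properness $\X_S(\mathcal{O}_{K,S})=X(K)$ for every number field $K$; in particular $\rk_m\Pic(\X_S)[m]=\rk_m\Pic(X)_{\tors}$. Moreover $V=X$, $m$ and $S$ satisfy the hypotheses of Theorem~\ref{MainTheorem}, by the argument of Corollary~\ref{corollary1}: a class $L\in\Pic(\X_S)[m]=\Pic(X)[m]$ lifts, via fppf Kummer theory over $\X_S$, to a $\mu_m$-torsor $\mathcal{T}\to\X_S$, whose restriction $T\to X$ lifts $L$; and for $P\in X(K)$ the valuative criterion extends $P$ to $\tilde P\colon\Spec(\mathcal{O}_{K,S})\to\X_S$, so $P^{*}T$ is the restriction to $\Spec(K)$ of $\tilde P^{*}\mathcal{T}\in H^{1}(\mathcal{O}_{K,S},\mu_m)$ and thus lies in $H^{1}(\mathcal{O}_{K,S},\mu_m)$.

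Next I would run the proof of Theorem~\ref{MainTheorem} with $V=X$ and a generically finite rational map $\phi\colon X\to\mathbb{A}^{l}$, $l=\dim X$. That proof furnishes $r:=\rk_m\Pic(\X_S)[m]$ torsors $X_i\to X$ lifting a basis of a subgroup $(\Z/m)^{r}\subseteq\Pic(\X_S)[m]$, and, for the admissible $t$, a point $P\in X(K)$ such that the classes $P^{*}X_i\in H^{1}(\mathcal{O}_{K,S},\mu_m)$ generate a subgroup $\cong(\Z/m)^{r}$ meeting $\Z_S^{\times}/m$ trivially. The additional ingredient is the commutativity of the square whose top row is the Kummer surjection, whose bottom row is the map $c$, and whose vertical maps are pull-back along $P$:
$$
\begin{CD}
H^{1}(\X_S,\mu_m) @>>> \Pic(\X_S)[m] \\
@VVV @VVV \\
H^{1}(\mathcal{O}_{K,S},\mu_m) @>c>> \Cl(\mathcal{O}_{K,S})[m]
\end{CD}
$$
It identifies each $c(P^{*}X_i)$ with the image of the class of $\mathcal{T}_i$ under $P^{*}\colon\Pic(\X_S)[m]\to\Cl(\mathcal{O}_{K,S})[m]$, so the subgroup of $\Cl(\mathcal{O}_{K,S})[m]$ generated by the $c(P^{*}X_i)$ lies in $\img(P^{*})$. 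Carrying out for this subgroup the $m$-rank computation at the end of the proof of Theorem~\ref{MainTheorem} — applying Lemma~\ref{m_rank_sequence} to $0\to A\to B\to c(\langle P^{*}X_i\rangle)\to 0$, where $A$ is the image of $\mathcal{O}_{K,S}^{\times}/m$ in $H^{1}(\mathcal{O}_{K,S},\mu_m)/(\Z_S^{\times}/m)$, a free $\Z/m$-module of rank $\rk\mathcal{O}_{K,S}^{\times}-\#S$, and $B$ is the subgroup it spans together with the image of $\langle P^{*}X_i\rangle$ — yields $\rk_m\img(P^{*})\geq r+\#S-\rk\mathcal{O}_{K,S}^{\times}$, which is the first assertion; infinitely many distinct $K$ arise by restricting $\phi$ to a general curve in $X$, on which the hypotheses of Theorem~\ref{MainTheorem} persist, and invoking Theorem~\ref{Zan}.

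For the superelliptic refinement one takes $X$ to be the normalisation of $y^{n}=f(x)$ with $(\deg f,n)=1$, so that $l=1$ and $X$ carries a rational point at infinity, and chooses $\phi$ as in the proof of Corollary~\ref{super}: after rescaling $x$ and $y$ — which changes neither $X$ nor $\X$ nor $S$ — one may assume $f\in\Z[x]$ with leading coefficient $-1$, and one sets $M=\prod_{p\in S}p$ and $\phi=x-\tfrac{1}{M}$. Then $[\Q(P_t):\Q]=n$, every $p\in S$ is totally ramified in $\Q(P_t)$ (since $v_p(f((tM+1)/M))=-\deg f$ and $(\deg f,n)=1$), and for $t\gg 0$ the field $\Q(P_t)$ has the maximal number of complex places, so that $\rk\mathcal{O}_{\Q(P_t),S}^{\times}-\#S=\rk\mathcal{O}_{\Q(P_t)}^{\times}=\bigl[\tfrac{n-1}{2}\bigr]$. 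Substituting into the bound of the previous paragraph gives $\rk_m\img(P^{*})\geq\rk_m\Pic(\X_S)[m]-\bigl[\tfrac{n-1}{2}\bigr]$, and Theorem~\ref{Zan} supplies infinitely many distinct fields $\Q(P_t)$.

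I expect the main obstacle to be the strengthening of Theorem~\ref{MainTheorem} carried out in the second paragraph: one must check that the $m$-independence surviving the successive quotients in that proof is witnessed precisely by classes pulled back from $\Pic(\X_S)[m]$, which forces the diagram chase above and some care with Lemma~\ref{m_rank_sequence} when $m$ is not squarefree — this is why one works with the auxiliary group $B=\langle P^{*}X_i\rangle+A$ rather than with the quotient $\langle P^{*}X_i\rangle/(\langle P^{*}X_i\rangle\cap A)$, whose $m$-rank could be strictly smaller than $r+\#S-\rk\mathcal{O}_{K,S}^{\times}$. A secondary, routine point is justifying $\Pic(\X_S)\cong\Pic(X)$ for $\X$ merely normal, which is handled by the observation that $\X_S$ is in fact regular.
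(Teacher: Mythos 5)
Your proposal is correct and follows essentially the same route as the paper, whose proof simply points to Lemma~\ref{good_model}, Corollary~\ref{corollary1}, the proof of Theorem~\ref{MainTheorem} and the proof of Corollary~\ref{super}; your write-up supplies exactly the Kummer-diagram chase identifying $c(P^{*}X_i)$ with $P^{*}L_i$ and the ensuing application of Lemma~\ref{m_rank_sequence} that this ``extraction'' requires. One small inaccuracy in your closing aside: since $A\cong(\Z/m)^{n}$ is \emph{free}, applying Lemma~\ref{m_rank_sequence} to $0\to E\cap A\to E\to E/(E\cap A)\to 0$ with $E$ the image of $\langle P^{*}X_i\rangle$ gives $\rk_m\bigl(E/(E\cap A)\bigr)=r-\rk_m(E\cap A)\geq r-n$ directly, so the quotient's $m$-rank cannot in fact drop below $r+\#S-\rk\mathcal{O}_{K,S}^{\times}$; the two bookkeeping routes are equally valid, and the choice of $B=A+E$ is a matter of taste rather than necessity.
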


\begin{proof}
Let us note here that our $S$ is \emph{a priori} larger than the set of places of bad reduction of $X$, but the arguments in the proof of Lemma~\ref{good_model} still hold, and give us the equality $\Pic(\X_S)=\Pic(X)$.
Then Corollary~\ref{corollary1} can be applied with the same set $S$, and the first statement can be extracted from the proof of Theorem~\ref{MainTheorem}. The second statement follows from the proof of Corollary~\ref{super}.
\end{proof}

\begin{cor}
\label{partial_answer}
Let $\X$ be an arithmetic variety whose generic fiber is a smooth hyperelliptic curve with a rational Weierstrass point. Let $\mathcal{L}$ be a torsion line bundle over $\X$ whose generic fiber is nontrivial. Then there exists an infinity of imaginary quadratic fields $K$ with a section $P$ of $\X$ over $\mathcal{O}_K$ such that the pull-back $P^*\mathcal{L}$ is nontrivial.
\end{cor}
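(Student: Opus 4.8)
The idea is to specialise the class of $\mathcal{L}$ itself, by re-running the proof of Theorem~\ref{MainTheorem} (the case $r=1$ suffices) together with the proof of Corollary~\ref{super} for $n=2$, starting from a $\mu_m$-torsor that lifts precisely the class of $\mathcal{L}$. Let $C$ be the generic fiber of $\X$, a smooth hyperelliptic curve over $\Q$ with a rational Weierstrass point, of genus $g$, and set $L:=\mathcal{L}|_C$; by hypothesis $L$ is a nontrivial torsion class in $\Pic(C)$, of some exact order $m>1$. Let $S$ be the set of places of bad reduction of $\X$ and $\X_S:=\X\times_{\Z}\Spec(\Z_S)$, which is a regular model of $C$ over $\Z_S$. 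As noted in the proof of Corollary~\ref{arithmetic_bundles}, the argument of Lemma~\ref{good_model} still applies and gives $\Pic(\X_S)=\Pic(C)$; since an isomorphism preserves orders, the class of $\mathcal{L}|_{\X_S}$ lies in $\Pic(\X_S)[m]$, is nonzero of exact order $m$, and corresponds to $L$.

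Using the surjection $H^1(\X_S,\mu_m)\twoheadrightarrow\Pic(\X_S)[m]$ from fppf Kummer theory, pick a $\mu_m$-torsor $X_1\to\X_S$ lifting the class of $\mathcal{L}|_{\X_S}$, and let $X_1^{\mathrm{gen}}\to C$ be its restriction to $C$. Since the powers $L^{\otimes i}$ for $1\le i\le m-1$ are nontrivial torsion line bundles on $C$, hence on $C_{\overline{\Q}}$ (as $C$ has a rational point), they have no nonzero global sections, so $X_1^{\mathrm{gen}}$ is geometrically connected, hence a geometrically integral degree-$m$ cover of $C$. Following the proof of Corollary~\ref{super} with $n=2$, $\deg f=2g+1$, write $C$ birationally as $y^2=f(x)$, rescale so that $f\in\Z[x]$ has leading coefficient $-1$, set $M=\prod_{p\in S}p$ and $\phi=x-\tfrac1M$, a degree-$2$ function on $C$. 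Apply Hilbert's irreducibility theorem (Theorem~\ref{HIT}, with auxiliary field $k=\Q[\sqrt[m]{u}\mid u\in\Z_S^{\times}]$) to $X_1^{\mathrm{gen}}\to C\xrightarrow{\phi}\mathbb{A}^1$: for all but $O(\sqrt N)$ integers $t=1,\dots,N$ one obtains $P_t\in C(\Q(P_t))$ with $[\Q(P_t):\Q]=2$, with $\Q(P_t)$ linearly disjoint from $k$, with every prime of $S$ totally ramified in $\Q(P_t)$ (since $v_p\bigl(f(\tfrac{tM+1}{M})\bigr)=-(2g+1)$ is prime to $2$), and, because $L$ extends to the torsor $X_1$ over $\X_S$, with $P_t^*X_1^{\mathrm{gen}}\in H^1(\mathcal{O}_{\Q(P_t),S},\mu_m)$ of order $m$ modulo $\Z_S^{\times}/m$.

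Now take $t\gg0$, so that $f(\tfrac{tM+1}{M})<0$ and $K:=\Q(P_t)$ is imaginary quadratic; then $\rk\mathcal{O}_{K,S}^{\times}=\#S$, and linear disjointness from $\Q(\zeta_{2m})\subseteq k$ makes $\Z_S^{\times}/m\to\mathcal{O}_{K,S}^{\times}/m$ an isomorphism. Hence $P_t^*X_1^{\mathrm{gen}}$ lies outside $\mathcal{O}_{K,S}^{\times}/m$, the kernel of $c\colon H^1(\mathcal{O}_{K,S},\mu_m)\to\Cl(\mathcal{O}_{K,S})[m]$ in \eqref{kummer}. By functoriality of Kummer theory (the commutative specialisation square in the proof of Corollary~\ref{corollary1}), $c(P_t^*X_1^{\mathrm{gen}})$ is the class in $\Cl(\mathcal{O}_{K,S})$ of $P_t^*(\mathcal{L}|_{\X_S})$, so that class is nontrivial. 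Since $\X$ is proper over $\Z$, the point $P_t$ extends by the valuative criterion to a section $\widetilde P_t\colon\Spec(\mathcal{O}_K)\to\X$, whose restriction over $\Spec(\mathcal{O}_{K,S})$ is the section above; therefore $\widetilde P_t^*\mathcal{L}$ maps to the nontrivial class of $P_t^*(\mathcal{L}|_{\X_S})$ under $\Cl(\mathcal{O}_K)\twoheadrightarrow\Cl(\mathcal{O}_{K,S})$, and so $\widetilde P_t^*\mathcal{L}$ is nontrivial. Finally, by Theorem~\ref{Zan} the fields $\Q(P_t)$ take infinitely many distinct values, giving infinitely many imaginary quadratic $K$ with the required section.

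The main obstacle is not a hard estimate but the need to reopen the proof of Theorem~\ref{MainTheorem} and track the \emph{specific} class of $\mathcal{L}$: through $H^1(\X_S,\mu_m)\to\Pic(\X_S)[m]$, through the identification $\Pic(\X_S)=\Pic(C)$, through the specialisation square, and through the Kummer sequence \eqref{kummer}. In particular one must check that in the imaginary quadratic case with all primes of $S$ totally ramified the unit term $\rk\mathcal{O}_{K,S}^{\times}-\#S$ vanishes, so that a class which is nontrivial in $H^1$ modulo units stays nontrivial in the ideal class group. The remaining bookkeeping — relating $\X$, $\X_S$, $C$ and the bases $\Spec\mathcal{O}_K$, $\Spec\mathcal{O}_{K,S}$, $\Spec K$ — is handled by the valuative criterion of properness for $\X/\Z$ and the surjection $\Cl(\mathcal{O}_K)\twoheadrightarrow\Cl(\mathcal{O}_{K,S})$.
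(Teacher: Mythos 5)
Your proof is correct, but it takes a genuinely different route from the paper's. The paper first observes that it suffices to treat the case where $\mathcal{L}$ has prime order $p$ (replace $\mathcal{L}$ by $\mathcal{L}^{\otimes m/p}$). Then it invokes the second statement of Corollary~\ref{arithmetic_bundles} with $n=2$, which gives $\rk_p\img(P^*)\geq\rk_p\Pic(\X_S)[p]$ for infinitely many imaginary quadratic fields; since $\img(P^*)$ is a quotient of the $\mathbb{F}_p$-vector space $\Pic(\X_S)[p]$, the rank inequality forces $P^*$ to be \emph{injective}, and the conclusion follows immediately from the nontriviality of $\mathcal{L}|_{\X_S}$. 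Your argument, by contrast, does not reduce to prime order: you reopen the proofs of Theorem~\ref{MainTheorem} (with $r=1$) and Corollary~\ref{super} ($n=2$) so as to track the specific class of $\mathcal{L}$ through the Kummer lift, the Hilbert irreducibility application (with the auxiliary field $k$), and the Kummer sequence \eqref{kummer}, showing directly that $P_t^*X_1^{\mathrm{gen}}$ is nontrivial modulo $\mathcal{O}_{K,S}^{\times}/m=\Z_S^{\times}/m$. The paper's route is much shorter and treats $P^*$ as a black box whose injectivity is forced by rank considerations — a trick available precisely because $p$ is prime (for composite $m$, $\rk_m\img(P^*)=\rk_m\Pic(\X_S)[m]$ does not imply $\ker(P^*)=0$). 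Your route avoids the prime-order reduction and in fact yields a slightly stronger conclusion — that $P^*\mathcal{L}$ has exact order $m$, not merely that it is nontrivial — at the cost of redoing the Hilbert irreducibility bookkeeping (geometric integrality of the torsor, linear disjointness from $k$, total ramification in $S$) rather than citing the corollaries already established.
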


\begin{proof}
Obviously, it suffices to prove the result when $\mathcal{L}$ has prime order $p$. In this case, the equality $\rk_p \img(P^*)=\rk_p \Pic(\X_S)[p]$ implies that $\ker(P^*)=0$. Therefore, the second statement in Corollary~\ref{arithmetic_bundles} implies that, for an infinity of imaginary quadratic fields $K$, the map
$$
P^*:\Pic(\X_S)[p]\longrightarrow \Cl(\mathcal{O}_{K,S})[p]
$$
is injective. On the other hand, we know that $\mathcal{L}$ has a nontrivial generic fiber, so the restriction of  $\mathcal{L}$ to $\X_S$ is also nontrivial. This proves the result.
\end{proof}


\bibliographystyle{amsalpha}
\bibliography{class2.bib}


\vskip 20pt

Jean Gillibert
\smallskip

Institut de Math\'ematiques de Bordeaux

CNRS UMR 5251

Universit\'e Bordeaux 1

351, cours de la Lib\'eration

F-33400 Talence, France.
\medskip

\texttt{jean.gillibert@math.u-bordeaux1.fr}

\bigskip

Aaron Levin
\smallskip

Department of Mathematics

Michigan State University

619 Red Cedar Road

East Lansing, MI 48824
\medskip

\texttt{adlevin@math.msu.edu}

\end{document}